\documentclass[11pt,a4paper]{article}

\usepackage{lmodern}
\usepackage[OT2,T1]{fontenc}
\usepackage[cpRIM]{inputenc}
\usepackage{url}
\usepackage{amsmath}
\usepackage{array}
\usepackage[pdftex]{graphicx}
\usepackage{amsfonts}
\usepackage{amssymb}
\usepackage{amsthm}
\usepackage{hyperref}
\usepackage{color}
\usepackage{a4wide}

\vspace{1cm}
\textheight 23cm
\textwidth 16cm
\oddsidemargin 0cm

\newtheorem{Theorem}{Theorem}[section]

\newtheorem{Lemma}[Theorem]{Lemma}

\newtheorem{Conjecture}[Theorem]{Conjecture}

\theoremstyle{definition}


\def \bR {\mathbb{R}}

\def \bZ {\mathbb{Z}}

\def \bS {\mathbb{S}}


\def \cR {\mathcal{R}}

\def \fA {\mathfrak{A}}

\def \fG {\mathfrak{G}}

\def \fE {\mathfrak{E}}

\def \fF {\mathfrak{F}}

\def \indic#1 {1\!\!1_{\!#1}}


\def \wt {\widetilde}
\def \Ba {\begin{aligned}}
\def \Ea {\end{aligned}}
\setlength{\parindent}{0pt} 
\setlength\topmargin{0in}
\setlength\headheight{0in}
\setlength\headsep{0in}
\allowdisplaybreaks[4]


\begin{document}
\author{Kæstutis Èesnavièius}
\title{On a Generalization of the Flag Complex Conjecture of Charney and Davis}
\date{}
\maketitle
\begin{abstract}\noindent The Flag Complex Conjecture of Charney and Davis states that for a simplicial complex $S$ which triangulates a $(2n - 1)$-generalized homology sphere as a flag complex one has $(-1)^n \sum_{\sigma \in S} \left(\frac{-1}{2}\right)^{\dim\sigma + 1} \ge 0$, where the sum runs over all simplices $\sigma$ of $S$ (including the empty simplex). Interpreting the $1$-skeleta of $\sigma\in S$ as graphs of Coxeter groups, we present a stronger version of this conjecture, and prove the equivalence of the latter to the Flag Complex Conjecture.\end{abstract}

\footnotetext{MSC: 05E45, 20F55}
\footnotetext{Keywords: Coxeter group, Charney-Davis Conjecture, Flag Complex Conjecture, homology manifold, generalized homology sphere, simplicial complex, flag complex}
\section{Introduction}
Motivated by a longstanding Hopf-Thurston conjecture on the sign of the Euler characteristic of a closed, aspherical manifold of even dimension, in \cite{Charney} Charney and Davis conjectured a linear inequality that the number of faces of each dimension of a flag simplicial complex triangulating an odd-dimensional sphere should satisfy. Stanley \cite{Stanley}, Davis and Okun \cite{Okun}, Karu \cite{Karu} and Frohmader \cite{Frohmader} have settled partial cases of the conjecture, but the general case for spheres of dimension $n > 3$ remains open.\\

The Charney-Davis Conjecture is closely related to simplicial complexes arising from Coxeter groups, and is in fact a special case of the Orbifold Characteristic Conjecture for the Davis complex of a Coxeter group of type HM; see Chapter 16 of \cite{Davis} for details. In this paper we formulate a conjecture (Conjecture \ref{general} below), which reduces to the Charney-Davis Conjecture in the special case when all Coxeter groups involved are right-angled, and prove that this generalization is in fact as strong as the original conjecture.

\section{Coxeter Groups}
A Coxeter group $W$ is a group given by a presentation $W = \langle C\vert\cR\rangle$, with a set of generating involutions $C = \{c_1, c_2, \dotsc, c_n\}$, and a set of relations $\cR = \{ c_i^2 = 1\}\cup\{ (c_ic_j)^{m_{ij}} = 1, i \neq j, 2 \le m_{ij} \le \infty \}$ (where $m_{ij} = \infty$ designates the absence of a relation of the type $(c_ic_j)^k = 1$). Note that $m_{ij} = 2$ entails the commutativity of $c_i$ and $c_j$: $c_jc_i = c_i^2c_jc_ic_j^2 = c_i(c_ic_j)^2c_j = c_ic_j$; similarly, $m_{ij} = m_{ji}$. The Coxeter group $W$ is called \emph{right-angled} if all $m_{ij}$ are either $2$ or $\infty$.\\

To every Coxeter group $W$ one associates its \emph{Coxeter graph} $\Gamma$, which succinctly carries all the information needed to define a Coxeter group by means of generators and relations. For its construction, index the vertices of $\Gamma$ by the elements of $C$ and join two vertices $c_i$ and $c_j$ by an edge iff $m_{ij} \ge 3$; label the edge with $m_{ij}$ if the latter is $\ge 4$. Therefore, the absence of an edge between $c_i$ and $c_j$ stands for $m_{ij} = 2$ (i.e., $c_i$ and $c_j$ commute), whereas the absence of a label means $m_{ij} = 3$. Note that given $\Gamma$ one can unequivocally restore $W$. If $\Gamma$ has several connected components, then the corresponding $W$ decomposes as a direct product. Coxeter groups $W$ with connected graphs $\Gamma$ are called \emph{irreducible}; in deciding whether $\Gamma$ gives rise to a finite $W$ the classification of all finite Coxeter groups (see, e.g., the Chapter 2 of \cite{Humphreys}) comes in handy. We record the finite irreducible $W$ in terms of their Coxeter graphs in the Appendix.

\section{The Flag Complex Conjecture}
A topological space $X$ is a \emph{homology $n$-manifold} if it has the same local homology groups as $\bR^n$, i.e., $H_i(X, X - x) = H_i(\bR^n, \bR^n - \{0\}) = \begin{cases}\bZ\quad \mbox{if }i = n,\\ 0\quad \mbox{otherwise},\end{cases}$ for each $x \in X$.
If in addition $X$ has the same homology as the $n$-sphere $H_i(X) = H_i(\bS^n) = \begin{cases}\bZ\quad \mbox{if }i = 0\mbox{ or }i = n,\\ 0\quad \mbox{otherwise,}\end{cases}$ it is called a \emph{generalized homology $n$-sphere}, or a $\mathrm{GHS}^n$ for short.\\

If $\sigma$ is a simplex of a simplicial complex $S$, its \emph{closed star} $\mathrm{St}(\sigma)$ is the union of all simplices in $S$ that have $\sigma$ as a face. The \emph{link} $\mathrm{Lk}(\sigma)$ of $\sigma$ is the union of all faces $\tau$ of simplices in $\mathrm{St}(\sigma)$, such that $\sigma\cap\tau=\emptyset$. Note that $\mathrm{Lk(\sigma)}$ is itself a simplicial complex.\\

The following lemma gives a criterion for a simplicial complex $S$ to be a homology $n$-manifold in terms of the topology of the links of simplices of $S$. For the proof we refer to Lemma 10.4.6 in \cite{Davis}.
\begin{Lemma}\label{GHS}Suppose $S$ is an $n$-dimensional simplicial complex. The following are equivalent.
\begin{enumerate}
\item $S$ is a homology $n$-manifold.
\item For each simplex $\sigma$ of $S$, $\mathrm{Lk}(\sigma)$ is homeomorphic to a $\mathrm{GHS}^{n - \dim \sigma - 1}$.
\end{enumerate}\end{Lemma}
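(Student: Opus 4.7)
The plan is to deduce both implications from a single local-homology identification. For a point $x$ in the relative interior of a $k$-simplex $\sigma \in S$, I would first establish
\[
H_i(S, S - x) \;\cong\; \widetilde{H}_{i - k - 1}(\mathrm{Lk}(\sigma)).
\]
The proof is a four-step routine. Excision replaces the pair by $(\mathrm{St}(\sigma), \mathrm{St}(\sigma) - x)$. The homeomorphism $\mathrm{St}(\sigma) \cong \sigma * \mathrm{Lk}(\sigma)$ makes $\mathrm{St}(\sigma)$ contractible (any join with a non-empty simplex is so), whence the long exact sequence of the pair collapses to $H_i(\mathrm{St}(\sigma), \mathrm{St}(\sigma) - x) \cong \widetilde{H}_{i - 1}(\mathrm{St}(\sigma) - x)$. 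A radial projection away from $x$ along $\sigma$, extended by the join structure, yields a deformation retraction $\mathrm{St}(\sigma) - x \simeq \partial\sigma * \mathrm{Lk}(\sigma) \cong \bS^{k - 1} * \mathrm{Lk}(\sigma)$. The classical join formula $\widetilde{H}_m(\bS^{k-1} * Y) \cong \widetilde{H}_{m - k}(Y)$ then closes the computation; the degenerate case $k = 0$ falls under the conventions $\bS^{-1} = \emptyset$ and $\emptyset * Y = Y$.

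The implication (b) $\Rightarrow$ (a) is then a direct substitution: if every $\mathrm{Lk}(\sigma)$ is a $\mathrm{GHS}^{n - k - 1}$, then $\widetilde{H}_j(\mathrm{Lk}(\sigma)) = \bZ$ precisely when $j = n - k - 1$, so the displayed isomorphism makes $H_i(S, S - x) = \bZ$ exactly when $i = n$, which is the defining local-homology condition for a homology $n$-manifold.

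For (a) $\Rightarrow$ (b) I would apply the same identification twice. Used as it stands on $S$, it yields $\widetilde{H}_j(\mathrm{Lk}(\sigma)) = \bZ$ iff $j = n - k - 1$, so every link already has the homology of $\bS^{n - k - 1}$. To check that $\mathrm{Lk}(\sigma)$ is itself a homology $(n - k - 1)$-manifold, reapply the identification to the simplicial complex $\mathrm{Lk}(\sigma)$: for $y$ in the interior of a $j$-simplex $\tau \in \mathrm{Lk}(\sigma)$,
\[
H_i(\mathrm{Lk}(\sigma), \mathrm{Lk}(\sigma) - y) \;\cong\; \widetilde{H}_{i - j - 1}(\mathrm{Lk}_{\mathrm{Lk}(\sigma)}(\tau)) \;=\; \widetilde{H}_{i - j - 1}(\mathrm{Lk}_S(\sigma * \tau)),
\]
using the elementary identity $\mathrm{Lk}_{\mathrm{Lk}(\sigma)}(\tau) = \mathrm{Lk}_S(\sigma * \tau)$. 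The sphere-homology statement obtained in the previous sentence, applied to the simplex $\sigma * \tau \in S$ of dimension $k + j + 1$, concentrates this group in degree $i = n - k - 1$, which is precisely the required local-homology condition. The only step that truly demands care is the deformation retraction $\mathrm{St}(\sigma) - x \simeq \partial\sigma * \mathrm{Lk}(\sigma)$; once this is made precise, the remainder of the argument is formal bookkeeping with indices.
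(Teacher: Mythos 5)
Your argument is correct, and it is essentially the standard proof: the paper itself gives no argument for this lemma but simply defers to Lemma 10.4.6 of Davis's book, whose proof runs along the same lines as yours (local homology at a point $x$ in the open simplex $\sigma$ identified, via excision, the contractibility of $\mathrm{St}(\sigma) \cong \sigma * \mathrm{Lk}(\sigma)$, the retraction of $\mathrm{St}(\sigma) - x$ onto $\partial\sigma * \mathrm{Lk}(\sigma)$, and the suspension/join formula, with $\widetilde{H}_{i - \dim\sigma - 1}(\mathrm{Lk}(\sigma))$ as the outcome), followed by the identity $\mathrm{Lk}_{\mathrm{Lk}(\sigma)}(\tau) = \mathrm{Lk}_S(\sigma * \tau)$ to bootstrap the homology-manifold condition for the links. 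Two small points of hygiene: the equivalence is true only when ``each simplex'' is read as ``each \emph{nonempty} simplex'' (the link of the empty simplex is $S$ itself, and a homology $n$-manifold need not have the homology of $\bS^n$ --- a triangulated torus is a counterexample); your identification silently enforces this reading, since it needs a point $x$ in the relative interior of $\sigma$. Also, for top-dimensional $\sigma$ one should record the convention that the empty complex is the $\mathrm{GHS}^{-1}$, with $\widetilde{H}_{-1}(\emptyset) = \bZ$, so that the bookkeeping in both directions covers that degenerate case; with these conventions in place your index computation in both implications is exactly right.
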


In the sequel we will be dealing with simplicial complexes $S$, the edges of which have assigned weights in the interval $2\le m\le \infty$ (the weight $\infty$ will signify the absence of an edge between corresponding vertices, whereas pairs of vertices with finite weight edges joining them will be said to be \emph{incident}). In this context the term \emph{flag complex} will stand for a simplicial complex $S$ in which every subset of pairwise incident edges spans a simplex. If $S$ is a finite flag complex, such that any simplex $\sigma \in S$ defines (by interpreting the $1$-skeleton of $\sigma$ as a Coxeter graph) a finite Coxeter group $W_{\sigma}$, we will set
\begin{equation}\label{omega}\tag{$\ddag$}\omega(S):=\sum\limits_{\sigma \in S} \frac{(-1)^{\dim{\sigma + 1}}}{\# W_{\sigma}}.\end{equation}
Note that if all the edges of $\sigma$ have weight $2$, the Coxeter group that $\sigma$ defines is $(\bZ/2\bZ)^{\dim\sigma + 1}$ and therefore $\# W_{\sigma} = 2^{\dim\sigma + 1}$. The following is the Flag Complex Conjecture of Charney and Davis formulated in \cite{Charney}.
\begin{Conjecture}[Charney-Davis]\label{CD}Suppose $S$ is a triangulation of a $(2n - 1)$-dimensional generalized homology sphere as a flag complex, where all the edges of $S$ have weight $2$ (the weights of absent edges are $\infty$). Then $(-1)^n\omega(S) \ge 0$.\end{Conjecture}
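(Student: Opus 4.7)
The statement is the classical Charney-Davis Flag Complex Conjecture, which remains open for $n>3$, so what follows is a plan of attack rather than a finished argument. Since every edge of $S$ has weight $2$, the Coxeter group attached to any simplex $\sigma$ is $(\bZ/2\bZ)^{\dim\sigma+1}$ of order $2^{\dim\sigma+1}$, and $(-1)^n\omega(S)=(-1)^n\sum_{\sigma\in S}(-1/2)^{\dim\sigma+1}$ is, up to the sign $(-1)^n$, the value at $t=-1/2$ of the $f$-polynomial $\sum_{\sigma\in S} t^{\dim\sigma+1}$.

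The first step is to pass from the $f$-polynomial to the $h$-polynomial. A flag triangulation of a $\mathrm{GHS}^{2n-1}$ is Eulerian of dimension $2n-1$, so by the Dehn-Sommerville relations its $h$-polynomial $h_S(t)=\sum_{i=0}^{2n}h_it^i$ is palindromic ($h_i=h_{2n-i}$). Under the standard change of variables the Charney-Davis quantity equals, up to a positive factor, $h_S(-1)$, so the conjecture reduces to $(-1)^n h_S(-1)\ge 0$. By palindromy one can expand $h_S$ in the $\gamma$-basis, $h_S(t)=\sum_{i=0}^{n}\gamma_i t^i(1+t)^{2n-2i}$, and then $h_S(-1)=(-1)^n\gamma_n$. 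The inequality thus follows from Gal's stronger conjecture that all $\gamma_i$ of a flag Eulerian complex are nonnegative.

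I would then attempt to prove $\gamma_n\ge 0$ by one of three routes: (i) a combinatorial decomposition of $S$, or of a suitable subdivision, expressing $\gamma_n$ as a count of faces of an explicit subcomplex; (ii) an algebraic route through the Stanley-Reisner ring $k[S]/\theta$, exhibiting a Lefschetz element and identifying $\gamma_i$ with the dimension of a Lefschetz quotient; or (iii) a topological route through the Davis complex and $L^2$-Betti numbers of the associated right-angled Coxeter group, in the spirit of Davis-Okun. The hard part is that none of these routes is known to close in full generality: (i) has no a priori shelling of an arbitrary flag sphere to work with, (ii) relies on a hard Lefschetz property for $k[S]/\theta$ that is itself conjectural outside the polytopal setting, and (iii) has so far been made to succeed only in low dimensions or under extra symmetry. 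The actual plan of this paper, as the abstract announces, is the more modest but strategic one of recasting $\omega(S)$ in terms of the orders $\#W_\sigma$ of genuine Coxeter groups, so that the inequality becomes a statement about orbifold Euler characteristics on the Davis complex and the combinatorial conjecture is translated into group-theoretic terms where route (iii) has a real chance to apply.
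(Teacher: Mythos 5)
The statement you were asked about is a \emph{conjecture} in this paper: the paper quotes the Charney--Davis Flag Complex Conjecture verbatim and explicitly notes it is open for spheres of dimension $>3$; it never proves it. The paper's actual theorem is an equivalence in the opposite direction from what you describe at the end: it shows that the \emph{generalized} conjecture (arbitrary finite $W_\sigma$, not just right-angled ones) already follows from the right-angled Charney--Davis case, by a sequence of lemmas that lower the weight of one edge at a time ($m\ge 6$, then $5\to 4$, $4\to 3$, $3\to 2$) and show the resulting change in $\omega(S)$ is a combination of terms $\omega(\mathrm{Lk}(\sigma))$ with coefficients controlled by Bernoulli and Genocchi numbers, hence of the ``wrong'' sign by induction on dimension. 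So the paper does not recast Conjecture~\ref{CD} as an orbifold Euler characteristic statement to be attacked by $L^2$-methods; it leaves Conjecture~\ref{CD} as the open input and reduces the more general statement to it.

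As a proof attempt for the statement itself, your proposal therefore has the unavoidable gap you yourself acknowledge: the reduction chain ($S$ Eulerian, Dehn--Sommerville palindromy of the $h$-polynomial, $\gamma$-expansion, $(-1)^n\omega(S)$ proportional to $(-1)^n h_S(-1)=\gamma_n$) is standard and correct in outline, but it terminates in Gal's conjecture $\gamma_n\ge 0$, which is a strictly stronger open statement, and none of your three routes (shellings/decompositions, Lefschetz elements for $k[S]/\theta$, Davis--Okun $L^2$-arguments) is known to close it beyond the low-dimensional and special cases already in the literature. So nothing is established, and there is no proof in the paper to compare against; the only concrete correction to make is your characterization of the paper's strategy, as explained above.
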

The conjecture admits a generalization, which we will be concerned with in this paper.
\begin{Conjecture}[Generalized Flag Complex Conjecture]\label{general}Suppose $S$ is a triangulation of a $(2n - 1)$-dimensional generalized homology sphere as a flag complex, and $W_{\sigma}$ is finite for every simplex $\sigma \in S$. Then $(-1)^n\omega(S) \ge 0$.\end{Conjecture}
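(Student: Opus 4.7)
The plan is to attack the inequality $(-1)^n\omega(S)\ge 0$ directly by recognizing $\omega(S)$ as an orbifold Euler characteristic and then invoking the geometry of the associated Davis complex. Concretely, I assemble from $S$ a single Coxeter system $(W,V(S))$ whose generators are the vertices of $S$, with $m_{ij}$ equal to the weight of the edge between $v_i$ and $v_j$ in $S$, and $m_{ij}=\infty$ when no such edge is present. The hypothesis that $W_\sigma$ is finite for every $\sigma\in S$, together with the flag condition, forces $S$ to coincide with the nerve $L(W)$ of $(W,V(S))$, i.e.\ the complex of subsets $T\subseteq V(S)$ for which the parabolic $W_T$ is finite. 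A fundamental-chamber computation then identifies $\omega(S)$ with the rational orbifold Euler characteristic $\chi_{\mathrm{orb}}(\Sigma/W)$ of the Davis complex $\Sigma$ of $(W,V(S))$, with the normalization that makes the empty simplex contribute the $+1$ summand in \eqref{omega}.

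With this interpretation in hand, Lemma \ref{GHS} applied to every link shows that each $\mathrm{Lk}(\sigma)$ is a GHS of the expected dimension; combined with the flag and finite-parabolic hypotheses, this places $(W,V(S))$ in the class of Coxeter systems ``of type HM'' in the sense of Chapter 10 of \cite{Davis}, so $\Sigma$ is a CAT(0) homology $2n$-manifold admitting a proper cocompact action of $W$. The target inequality $(-1)^n\omega(S)=(-1)^n\chi_{\mathrm{orb}}(\Sigma/W)\ge 0$ is then exactly the Hopf-Thurston sign conjecture in the orbifold category, which on any torsion-free finite-index subgroup $W'\le W$ reduces, via the multiplicativity of Euler characteristics under finite covers, to the manifold version applied to the closed aspherical $2n$-manifold $\Sigma/W'$.

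Since Hopf-Thurston is itself open, I would fall back on induction on $n$. The base case $n=1$ is a direct computation: $S$ is a disjoint union of labelled cycles supporting finite Coxeter groups, and the classification recorded in the Appendix leaves only a finite list of local configurations whose $\omega$-signs can be checked by hand. For the inductive step I would seek a linear identity expressing $\omega(S)$ in terms of the values $\omega(\mathrm{Lk}(v))$ for $v\in V(S)$ -- each of which, by Lemma \ref{GHS}, is itself a flag triangulation of a $(2n-3)$-dimensional GHS with all parabolic subgroups finite, so that the induction hypothesis supplies $(-1)^{n-1}\omega(\mathrm{Lk}(v))\ge 0$ -- and then exploit the parity flip between $2n-1$ and $2n-3$. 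The hard part is that no such clean identity is available once some edge weight exceeds $2$: the summand $1/\#W_\sigma$ depends multiplicatively on the Coxeter type of $\sigma$ and not on $\dim\sigma$ alone, so the simple arithmetic relations that drive the right-angled case of Conjecture \ref{CD} collapse. The main obstacle is therefore to produce either an unconditional orbifold Hopf-Thurston statement, or a new combinatorial identity summing $1/\#W$ over intervals of the parabolic lattice of a finite irreducible Coxeter group; in the latter direction the classification recorded in the Appendix is presently the only available input.
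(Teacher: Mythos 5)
There is no proof to certify here, on either side: the statement you were given is a \emph{conjecture}, and the paper does not prove it either --- its sole result about Conjecture \ref{general} is Theorem \ref{main}, a \emph{conditional} reduction showing that Conjecture \ref{general} follows from the right-angled case, i.e.\ from the Charney--Davis Conjecture \ref{CD}, which remains open. Your proposal, by its own admission, also fails to close the argument: the orbifold route identifies $(-1)^n\omega(S)\ge 0$ with an instance of the Hopf--Thurston sign conjecture for the Davis complex (this is precisely the known connection recalled in the introduction via Chapter 16 of \cite{Davis}), which is open, and your fallback induction on $n$ stalls exactly where you say it does --- no identity expressing $\omega(S)$ through the vertex links $\omega(\mathrm{Lk}(v))$ is produced. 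So the proposal is a correct diagnosis of the difficulty, not a proof, and it does not even reach the weaker statement the paper actually establishes.

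The concrete idea you are missing is the one the paper runs with: do not try to compute $\omega(S)$ from links at all, but instead lower the weight of a single edge $e$ (from $m\ge 6$ down to smaller values, then $5\to 4$, $4\to 3$, $3\to 2$) and express the resulting \emph{change} in $\omega(S)$ as an explicit finite linear combination $\sum_\sigma c_\sigma\,\omega(\mathrm{Lk}(\sigma))$ over the simplices $\sigma$ containing $e$, grouped by the Coxeter type of $W_\sigma$ (types $B_n$, $F_4$, $H_3$, $H_4$, $A_n$, $D_n$, $E_6$, $E_7$, $E_8$). By Lemma \ref{GHS} each such $\mathrm{Lk}(\sigma)$ is a lower-dimensional GHS, so an induction on $n$ controls the sign of $\omega(\mathrm{Lk}(\sigma))$, and the whole problem reduces to showing the coefficients $c_\sigma$ have the right signs; the paper does this with generating functions, where the ``multiplicative dependence on Coxeter type'' that you flag as the fatal obstacle is tamed by closed-form evaluations in terms of Bernoulli and Genocchi numbers (Lemmas \ref{ugly2} and \ref{ugly1} and the identities $\alpha_{n,t}=B_n/n!$, $\delta_n'=\delta_{n,t}=\frac{B_n}{n!}(4-2^{-(n-2)})$). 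Iterating until every finite edge weight is $2$ lands in the hypotheses of Conjecture \ref{CD}, giving the equivalence of Theorem \ref{main} --- but nothing unconditional, which is the most one can currently hope for.
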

We will prove that the two conjectures are equivalent, i.e., to prove the generalized version it suffices to prove the right-angled case.
\section{Modifying the Weights}
We say that a simplex $\sigma$ of $S$ is of type $G$, if $W_{\sigma}$ is $G$ ($G$ is one of the names $A_n$, $F_4$, $H_3$, etc. from the classification table in the Appendix). Note that in the case of a flag complex $S$ decreasing the weight of an edge does not change the set of simplices of $S$: from the classification table in the Appendix it is easy to see that $W_{\sigma}$ stays finite for $\sigma$ for which it was finite before this modification.\\

\begin{Lemma}\label{first}Suppose that the Conjecture \ref{general} holds for $n < k$, and $S$ is a triangulation of a generalized homology $(2k - 1)$-sphere as a flag complex such that $W_{\sigma}$ is finite whenever $\sigma \in S$. Suppose $e$ is an edge of $S$ of weight $6\le m < \infty$. Then in order to verify Conjecture \ref{general} for $S$, it suffices to verify it for $S^{\prime}$, obtained by replacing the weight of $e$ by $2 \le u < m$.\end{Lemma}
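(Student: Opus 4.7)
The plan is to express $\omega(S) - \omega(S')$ as a scalar multiple of $\omega(\mathrm{Lk}(e))$ and then close the argument by applying the inductive hypothesis to the lower-dimensional sphere $\mathrm{Lk}(e)$. As a starting observation, $S$ and $S'$ possess the same set of simplices (changing a single weight does not alter which vertex sets span simplices in the flag complex) and $W_\sigma^m = W_\sigma^u$ for every $\sigma$ not containing $e$; hence only simplices $\sigma \supseteq e$ contribute to the difference.

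The main structural step, which I expect to be the chief obstacle, is to use the hypothesis $m \ge 6$ together with the classification of finite Coxeter groups recorded in the Appendix to decompose $W_\sigma$ as a direct product whenever $\sigma \supseteq e$. The point is that $I_2(m)$ is the only finite irreducible Coxeter group whose graph bears an edge of weight $\ge 6$; since every $W_\sigma$ is finite by hypothesis, the connected component of $e$ in the Coxeter graph of each such $\sigma$ must be exactly $\{e\}$. This forces every vertex of $\sigma$ outside $e$ to be joined to both endpoints of $e$ by weight-$2$ edges, so writing $\sigma = e * \tau$ with $\tau \in \mathrm{Lk}(e)$ one obtains $W_\sigma^m = I_2(m) \times W_\tau$, and under the convention $I_2(2) := (\bZ/2\bZ)^2$ (so that $\#I_2(k) = 2k$ for all $k \ge 2$) also $W_\sigma^u = I_2(u) \times W_\tau$. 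Without this reduction, the sum defining $\omega(S) - \omega(S')$ would not factor, and the argument would collapse; it is precisely the bound $m \ge 6$ that makes it go through.

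Using this decomposition together with the identity $(-1)^{\dim\sigma + 1} = (-1)^{\dim\tau + 1}$, a routine calculation yields
$$\omega(S) - \omega(S') \;=\; \frac{u - m}{2mu}\,\omega(\mathrm{Lk}(e)),$$
where the sum over $\tau$ includes the empty simplex (corresponding to $\sigma = e$). It remains to check that $\mathrm{Lk}(e)$ satisfies the hypotheses of Conjecture \ref{general} in dimension $n = k - 1$: by Lemma \ref{GHS} it is a generalized homology $(2k - 3)$-sphere, it inherits the flag property from $S$, and every $W_\tau$ with $\tau \in \mathrm{Lk}(e) \subset S$ is finite. The assumed case $n = k - 1 < k$ then gives $(-1)^{k-1}\omega(\mathrm{Lk}(e)) \ge 0$; combined with $u - m < 0$, this yields $(-1)^k\bigl(\omega(S) - \omega(S')\bigr) \ge 0$, i.e.\ $(-1)^k\omega(S) \ge (-1)^k\omega(S')$, which is the desired reduction.
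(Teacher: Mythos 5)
Your proof is correct and takes essentially the same route as the paper's: the classification observation that for $m\ge 6$ the edge $e$ forms an isolated $I_2(m)$-component of $W_\sigma$ for every $\sigma\supseteq e$ (so the endpoints of $e$ are joined to the remaining vertices by weight-$2$ edges), the resulting identity $\omega(S')-\omega(S)=\frac{m-u}{2mu}\,\omega(\mathrm{Lk}(e))$, and the appeal to Lemma \ref{GHS} together with the inductive hypothesis applied to $\mathrm{Lk}(e)$ are precisely the paper's steps, merely spelled out in more detail.
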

\begin{proof}From the classification in the Appendix it is clear that whenever an edge $e$ of $S$ of weight $6\le m < \infty$ belongs to a simplex $\sigma$ of $S$, the vertices of $e$ are joined to the other vertices of $\sigma$ with weight $2$ edges. If we replace $e$ with an edge of weight $u$, the change in $\omega(S)$ will be (see formula (\ref{omega}))
\[\frac{1}{2u}\omega(\mathrm{Lk}(e)) - \frac{1}{2m}\omega(\mathrm{Lk}(e)) = \frac{m - u}{2mu}\omega(\mathrm{Lk}(e)).\]
By Lemma \ref{GHS} $\mathrm{Lk}(e)$ is a generalized homology $(2k - 3)$-sphere, so by assumption Conjecture \ref{general} is true for $\mathrm{Lk}(e)$ (it is a flag complex for which $W_{\sigma}$ is finite for $\sigma\in\mathrm{Lk}(e)$). That is, the change in $\omega(S)$ is of the opposite sign than the value predicted by Conjecture \ref{general}. The claim follows.\end{proof}

\begin{Lemma}\label{weight5}In the setting of Lemma \ref{first}, suppose $S$ has no finite-weight edges of weight $\ge 6$, and $e$ is of weight $5$. Then, in order to verify Conjecture \ref{general} for $S$, it suffices to verify it for $S^{\prime}$ obtained by changing the weight of $e$ to $4$.\end{Lemma}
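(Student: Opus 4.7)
My plan is to mirror the strategy of Lemma \ref{first}: show that $\omega(S)-\omega(S')$ has sign $(-1)^k$, so that (by the same argument as in Lemma \ref{first}) verifying Conjecture \ref{general} for $S'$ forces it for $S$. The obstacle is that, with $e$ of weight $5$, simplices $\sigma\in S$ containing $e$ may have their other vertices not all commuting with the endpoints $v_1,v_2$ of $e$: from the Appendix, the irreducible component $W_0\subseteq W_\sigma$ supported on $v_1,v_2$ must be $H_2$, $H_3$, or $H_4$. Accordingly, such $\sigma$ partition into three classes: (I) $W_0=H_2$, where all of $\sigma\setminus e$ commutes with both $v_1,v_2$; (II) $W_0=H_3$, where there is a unique $v_3\in\sigma$ joined to one endpoint of $e$ by a weight-$3$ edge, and the rest of $\sigma$ commutes with $\{v_1,v_2,v_3\}$; (III) $W_0=H_4$, where $\sigma$ also contains $v_4$ joined to $v_3$ by a weight-$3$ edge and commuting with $v_1,v_2$, and the rest of $\sigma$ commutes with $\{v_1,v_2,v_3,v_4\}$.

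Denote by $L_{\mathrm{comm}}$, $L_{v_3}$, $L_{v_3,v_4}$ the subcomplexes of `commuting rests' in the three cases. A direct computation, using the orders $|H_n|$, $|B_n|$ from the Appendix, gives
\[\omega(S)-\omega(S')=-\tfrac{1}{40}\omega(L_{\mathrm{comm}})+\tfrac{1}{80}\sum_{v_3}\omega(L_{v_3})-\tfrac{73}{28800}\sum_{(v_3,v_4)}\omega(L_{v_3,v_4}),\]
while the same partition applied to $\omega(\mathrm{Lk}(e))$ (where $v_3$ now contributes only a $\bZ/2$ factor and $\{v_3,v_4\}$ only an $A_2$) yields
\[\omega(\mathrm{Lk}(e))=\omega(L_{\mathrm{comm}})-\tfrac{1}{2}\sum_{v_3}\omega(L_{v_3})+\tfrac{1}{6}\sum_{(v_3,v_4)}\omega(L_{v_3,v_4}).\]
Comparing these, the coefficients of $\omega(L_{\mathrm{comm}})$ and $\omega(L_{v_3})$ in the first identity are exactly $-\tfrac{1}{40}$ times their counterparts in the second, so subtracting $-\tfrac{1}{40}\omega(\mathrm{Lk}(e))$ eliminates these terms and leaves
\[\omega(S)-\omega(S')=-\tfrac{1}{40}\,\omega(\mathrm{Lk}(e))+\tfrac{47}{28800}\sum_{(v_3,v_4)}\omega(L_{v_3,v_4}).\]

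The main technical step, and the place where I expect the bulk of the work to lie, is the identification $L_{v_3,v_4}=\mathrm{Lk}(\{v_1,v_2,v_3,v_4\})$: any vertex $w$ completing an $H_4$-simplex to a simplex of $S$ with finite Coxeter group must commute with all four of $v_1,v_2,v_3,v_4$, since any weight-$\ge 3$ edge from $w$ to exactly one of them would extend the $H_4$ diagram into a rank-$5$ linear or branched Coxeter diagram containing a weight-$5$ edge, absent from the finite classification, while weight-$\ge 3$ edges from $w$ to two or more of them immediately introduce a cycle in the Coxeter graph. Granted this identification, Lemma \ref{GHS} makes $\mathrm{Lk}(e)$ a generalized homology $(2k-3)$-sphere and each $L_{v_3,v_4}$ a generalized homology $(2k-5)$-sphere, so the induction hypothesis (Conjecture \ref{general} in dimensions $<k$) yields $(-1)^{k-1}\omega(\mathrm{Lk}(e))\ge 0$ and $(-1)^{k-2}\omega(L_{v_3,v_4})\ge 0$. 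Both terms on the right side of the final display then have sign $(-1)^k$, whence $(-1)^k[\omega(S)-\omega(S')]\ge 0$, and the reduction from $S$ to $S'$ goes through exactly as at the end of the proof of Lemma \ref{first}.
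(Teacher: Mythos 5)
Your argument is correct, and it lands on exactly the identity the paper derives, namely $\omega(S)-\omega(S')=-\tfrac{1}{40}\omega(\mathrm{Lk}(e))+\tfrac{47}{28800}\sum_{\sigma:H_4}\omega(\mathrm{Lk}(\sigma))$, followed by the same application of Lemma \ref{GHS} and the inductive hypothesis. The difference is only in the bookkeeping: the paper expresses the contribution of the simplices containing $e$ as $\tfrac{1}{2m}\omega(\mathrm{Lk}(e))$ plus inclusion--exclusion corrections indexed by the $H_3$- and $H_4$-simplices containing $e$ (the $H_3$ corrections then cancel numerically in the difference, since $\tfrac1{16}-\tfrac1{48}=\tfrac1{20}-\tfrac1{120}=\tfrac1{24}$), whereas you partition those simplices by the irreducible component of $W_\sigma$ containing $e$ ($H_2$, $H_3$ or $H_4$, with commuting complement), take the difference on the ``commuting rest'' complexes, and then eliminate $L_{\mathrm{comm}}$ and $L_{v_3}$ by comparing with the same partition of $\omega(\mathrm{Lk}(e))$. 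Your route avoids the inclusion--exclusion coefficients but must make explicit the one identification the paper leaves implicit, $L_{v_3,v_4}=\mathrm{Lk}(\{v_1,v_2,v_3,v_4\})$; your classification argument for it (no finite irreducible rank-$5$ diagram carries a label $5$, and joining $w$ to two of the four vertices creates a cycle) is exactly what is needed, and the analogous classification facts underlie the paper's correction coefficients as well. All the numerical values you report ($-\tfrac1{40}$, $\tfrac1{80}$, $-\tfrac{73}{28800}$, and hence $\tfrac{47}{28800}$) check out against the orders $\#I_2(5)=10$, $\#H_3=120$, $\#H_4=14400$, $\#B_2=8$, $\#B_3=48$, $\#B_4=384$, and your sign analysis reproduces the paper's conclusion that the change from $S'$ to $S$ moves $\omega$ in the direction predicted by Conjecture \ref{general}.
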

\begin{proof}Replacing $e$ with an edge of weight $4$ changes $\omega(S)$ by (summations run over all simplices $\sigma$ of $S$ of the indicated type which have $e$ as an edge) \[\begin{aligned}&\left(\frac{1}{8}\omega(\mathrm{Lk}(e)) + \sum_{\sigma:H_3}\left(\frac{1}{16} - \frac{1}{48}\right)\omega(\mathrm{Lk}(\sigma)) + \sum_{\sigma:H_4}\left(-\frac{1}{48} + \frac{1}{2}\left(\frac{1}{16} - \frac{1}{48}\right) + \frac{1}{384}\right)\omega(\mathrm{Lk}(\sigma))\right) - \\ &\left(\frac{1}{10}\omega(\mathrm{Lk}(e)) + \sum_{\sigma:H_3}\left(\frac{1}{20} - \frac{1}{120}\right)\omega(\mathrm{Lk}(\sigma)) + \sum_{\sigma:H_4}\left(-\frac{1}{60} + \frac{1}{2}\left(\frac{1}{20} - \frac{1}{120}\right) + \frac{1}{14400}\right)\omega(\mathrm{Lk}(\sigma))\right).\end{aligned}\]
Using an inclusion-exclusion type of argument the two summands calculate the contribution of all the simplices $\sigma$ of $S$ in which $e$ is involved in formula (\ref{omega}), when the weight of $e$ is declared $4$ and $5$, respectively. The coefficients in front of $\omega(\mathrm{Lk}(\sigma))$ account for all previous terms in which the contribution of simplices $\tau \supset \sigma$ have been calculated with incorrect weights, and corrects them for those $\tau$ in which the vertices of $\sigma$ are joined to the remaining vertices of $\tau$ by edges of weight $2$.\\

After rearranging the difference becomes
\[ \frac{1}{40}\omega(\mathrm{Lk}(e)) - \sum_{\sigma:H_4}\frac{47}{28800}\omega(\mathrm{Lk}(\sigma)). \]
Using Lemma \ref{GHS} and the assumptions we conclude that the change is of the opposite sign than the value of $\omega(S)$ predicted by Conjecture \ref{general}.\end{proof} 

\begin{Lemma}In the setting of Lemma \ref{first}, suppose $S$ has no finite-weight edges of weight $\ge 5$, and $e$ is of weight $4$. Then, in order to verify Conjecture \ref{general} for $S$, it suffices to verify it for $S^{\prime}$ obtained by changing the weight of $e$ to $4$.\end{Lemma}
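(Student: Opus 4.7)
The approach mirrors the proof of Lemma \ref{weight5}: compute the difference $\omega(S') - \omega(S)$ as a finite $\bQ$-linear combination of $\omega(\mathrm{Lk}(\sigma))$ for simplices $\sigma \supseteq e$, use Lemma \ref{GHS} to recognize each such $\mathrm{Lk}(\sigma)$ as a lower-dimensional GHS, and invoke the inductive hypothesis (Conjecture \ref{general} for $n<k$) to pin down the sign of each summand, and hence of the whole change. (The statement as printed appears to contain a typo -- ``to $4$'' produces no change from $e$'s original weight; by analogy with Lemmas \ref{first} and \ref{weight5} I will take the intended target to be $3$ or $2$, and the outline below is insensitive to the choice.)

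\textbf{Step 1.} Read off from the Appendix which finite irreducible Coxeter groups possess a weight-$4$ edge: these are precisely $F_4$ and $B_n$ for every $n \ge 2$ (with $B_2 = I_2(4)$). Consequently, for any simplex $\sigma$ of $S$ containing $e$ with $W_\sigma$ finite, the connected component $\alpha \subseteq \sigma$ of the Coxeter graph of $\sigma$ that contains $e$ must be of type $B_n$ or $F_4$; moreover, exactly as in Lemma \ref{first}, the remaining vertices of $\sigma$ have to be joined to the vertices of $\alpha$ by weight-$2$ edges, so they split off as a commuting factor. Hence $W_\sigma \cong W_\alpha \times W_\tau$ with $\tau \in \mathrm{Lk}(\alpha)$, and the $\sigma$-term of (\ref{omega}) factors as $\frac{1}{\# W_\alpha}$ times a quantity which, summed over $\tau$, yields $\omega(\mathrm{Lk}(\alpha))$.

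\textbf{Step 2.} Group the simplices $\sigma \supseteq e$ by the isomorphism type of their $e$-containing component $\alpha$, and write the total contribution via inclusion-exclusion, as in Lemma \ref{weight5}: each type $\alpha$ receives a coefficient which corrects for the nesting $B_2 \subset B_3 \subset \cdots$ and $B_2,\, B_3 \subset F_4$ of sub-components inside larger ones. Doing this once with the weight of $e$ set to $4$ and once with $e$ set to the new weight, and subtracting, expresses $\omega(S') - \omega(S)$ in the form $\sum_\alpha c_\alpha\, \omega(\mathrm{Lk}(\alpha))$ with explicit rational coefficients $c_\alpha$ analogous to the $\tfrac{1}{40},\, -\tfrac{47}{28800}$ that appeared in Lemma \ref{weight5}.

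\textbf{Main obstacle.} Unlike Lemma \ref{weight5}, where only the two sporadic types $H_3, H_4$ intervened, here the entire chain $B_2,\, B_3,\, \dots,\, B_{2k-1}$ plus $F_4$ has to be tracked; only finitely many $B_n$ actually occur because $\dim \sigma \le 2k-1$, but the alternating bookkeeping is noticeably heavier. The decisive computation is therefore to check that, after the $B_n$-cascade telescopes, every remaining coefficient $c_\alpha$ has precisely the sign which -- together with the sign of $\omega(\mathrm{Lk}(\alpha))$ predicted by Lemma \ref{GHS} and the induction -- makes $(-1)^k(\omega(S') - \omega(S))$ nonpositive, so that $(-1)^k \omega(S') \ge 0$ forces $(-1)^k \omega(S) \ge 0$. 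Once this sign verification is in hand, the conclusion is immediate, just as in Lemmas \ref{first} and \ref{weight5}.
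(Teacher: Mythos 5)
Your setup is the same as the paper's (inclusion--exclusion over the types $B_n$ and $F_4$, links recognized as lower-dimensional GHS via Lemma \ref{GHS}, signs supplied by the inductive hypothesis, target weight $3$ despite the typo), but the proposal stops exactly where the actual proof begins. The whole content of this lemma is the sign verification you label the ``Main obstacle'' and then defer: one must write down the recurrences for the coefficients $b_n,\widetilde b_n$ (and $f_4,\widetilde f_4$) coming from the nested $B$-chain, and prove that the differences satisfy $b_{2l+1}-\widetilde b_{2l+1}=0$, $(-1)^l(b_{2l}-\widetilde b_{2l})\le 0$ for all $l\ge 1$, and $f_4-\widetilde f_4\le 0$. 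This is not a routine telescoping check: the paper does it by passing to the generating series $\mathfrak{B}(x)=\sum_{n\ge 2}\beta_n x^n$ with $\beta_n=(-1)^n(b_n-\widetilde b_n)$, solving the convolution recurrence in closed form, $\mathfrak{B}(x)=1-\frac{x}{e^{x/2}+1}-\frac{x}{e^x-1}$, and reading off $\beta_n=\frac{B_n}{n!}\bigl(1-\frac{1}{2^{n-1}}\bigr)$ in terms of Bernoulli (and Genocchi) numbers, so that the required vanishing and alternation are exactly the classical facts $B_{2l+1}=0$ and $(-1)^lB_{2l}\le 0$. Without some such closed-form or inductive control of the coefficients, the claim that ``every remaining coefficient has precisely the sign'' needed is an assertion, not an argument, and it must hold uniformly in $n$ (for every $B_n$ that can occur), so it cannot be checked case by case.

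A second point you gloss over: the signs of the $\omega(\mathrm{Lk}(\sigma))$ themselves alternate with the parity of the number of vertices of $\sigma$. Concretely, the induction gives that $\omega(\mathrm{Lk}(\sigma))$ has the sign predicted for $\omega(S)$ when $\sigma$ is of type $B_{4l}$ or $F_4$, but the \emph{opposite} sign for type $B_{4l+2}$ (and gives nothing directly usable for odd $B_n$, which is why the odd differences must be shown to vanish). So the needed inequality is not ``all $c_\alpha$ of one sign'' but a specific alternating pattern in $l$, and the nontrivial miracle of the proof is that this pattern matches the sign pattern of the Bernoulli numbers. Your outline, as written, does not record this alternation and hence does not even state the correct target inequalities; filling both gaps essentially reproduces the paper's computation.
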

\begin{proof}When $e$ is replaced by an edge of weight $3$, using an argument analogous to the one in the previous lemma, we find the contribution of all simplices $\sigma$ containing $e$ in formula (\ref{omega}) to be \[\sum_{\sigma: B_n}b_n\cdot\omega(\mathrm{Lk}(\sigma)) + \sum_{\sigma: F_4}f_4\cdot\omega(\mathrm{Lk}(\sigma)),\] where the coefficients $b_n$, $f_4$ satisfy recurrence relations (the argument is a generalized version of the one used in the proof of Lemma \ref{weight5}; $b_n$ corrects for the summands in which the contribution of $\sigma$ of the type $B_n$ was calculated incorrectly and adds the correct contribution; the logical meaning of summation variable $j$, looking at the sketch of the Coxeter graph $\Gamma$ for $B_n$ in the Appendix, is the number $n - j$ of nodes that are cut-off from the left to get the new graph, the previously counted contribution of which the summand adjusts)
\[\begin{aligned} b_n &= \sum_{2\le j < n}(-1)^{n - j + 1}\frac{b_j}{(n - j + 1)!} + \frac{(-1)^n}{(n + 1)!},\quad\quad n\ge 2,\\ f_4 &= -\frac{b_2}{4} + 2\cdot\frac{b_3}{2} + \frac{1}{120}.\end{aligned}\]
Similarly, when the weight of $e$ is $4$, the contribution of corresponding simplices is
\[\sum_{\sigma: B_n}\widetilde{b}_n\cdot\omega(\mathrm{Lk}(\sigma)) + \sum_{\sigma: F_4}\widetilde{f}_4\cdot\omega(\mathrm{Lk}(\sigma)),\]
with
\[\begin{aligned} \widetilde{b}_n &= \sum_{2\le j < n}(-1)^{n - j + 1}\frac{\widetilde{b}_j}{(n - j + 1)!} + \frac{(-1)^n}{2^nn!},\quad\quad n\ge 2,\\ \widetilde{f}_4 &= -\frac{\widetilde{b}_2}{4} + 2\cdot\frac{\widetilde{b}_3}{2} + \frac{1}{2^7\cdot 3^2}.\end{aligned}\]
The difference is
\[\sum_{\sigma: B_n}(b_n - \widetilde{b}_n)\cdot\omega(\mathrm{Lk}(\sigma)) + \sum_{\sigma: F_4}(f_4 - \widetilde{f}_4)\cdot\omega(\mathrm{Lk}(\sigma)).\]
By Lemma \ref{GHS} combined with the assumptions, $\omega(\mathrm{Lk}(\sigma))$ is of the same sign as the value of $\omega(S)$ predicted by Conjecture \ref{general} for $\sigma$ of types $B_{4l}$, $F_4$, and of the opposite sign for $\sigma$ of types $B_{4l + 2}$. To prove that the change is of the opposite sign than the predicted sign of $\omega(S)$, it suffices to prove that $(-1)^{l}(b_{2l} - \widetilde{b}_{2l}), f_4 - \widetilde{f}_4\le 0$ and $b_{2l + 1} - \widetilde{b}_{2l + 1} = 0$, $l\ge 1$. We set $\beta_n = (-1)^n(b_n - \widetilde{b}_n)$ and note the recurrence relation
\[ \beta_n = -\sum_{2\le j < n}\frac{\beta_j}{(n - j + 1)!} + \frac{1}{(n + 1)!} - \frac{1}{2^nn!},\quad\quad n\ge 2.\]
Considering the generating formal power series $\mathfrak{B}(x) = \beta_2x^2 + \beta_3x^3 + \dotsb$ and an auxiliary series $\mathfrak{F}(x) = \frac{x}{2!} + \frac{x^2}{3!} + \dotsb = \frac{e^x - x - 1}{x}$ we get the relation
\[\begin{aligned}\mathfrak{B}(x) &= -\mathfrak{B}(x)\mathfrak{F}(x) + \sum_{n = 2}^{\infty}\left(\frac{x^n}{(n + 1)!} - \frac{x^n}{2^n n!}\right) = -\mathfrak{B}(x)\mathfrak{F}(x) + \frac{1}{x}\left(e^x - 1 - x\right) - \left(e^{x/2} - 1\right),\\ \mathfrak{B}(x) &= \frac{x}{e^x - 1}\cdot \left(\frac{e^x - 1}{x} - e^{x/2}\right) = 1 - \frac{x}{e^{x/2} + 1} - \frac{x}{e^x - 1}.\end{aligned}\]
But $\frac{x}{e^x - 1} = \sum\limits_{n = 0}^{\infty} \frac{B_nx^n}{n!}$, where $B_n$ are the familiar Bernoulli numbers: $B_0 = 1$, $B_1 = -\frac{1}{2}$, $B_2 = \frac{1}{6}$, $B_3 = 0$, $B_4 = -\frac{1}{30}$, etc.; also, $\frac{2y}{e^y + 1} = \sum\limits_{n = 0}^{\infty} \frac{G_ny^n}{n!}$, where $G_n = 2(1 - 2^n)B_n$ are the Genocchi numbers (sequence A036968 in \cite{Sloane}). We conclude that $\beta_n = -\frac{G_n}{2^nn!} - \frac{B_n}{n!} = \frac{B_n}{n!}(1-\frac{1}{2^{n - 1}})$, and the conclusion follows from the well-known properties of Bernoulli numbers: $B_{2l + 1} = 0$ and $(-1)^{l}B_{2l}\le 0$ for $l\ge 1$.\\

Finally, $f_4 - \widetilde{f}_4 = -\frac{1}{4}\beta_2 + \beta_3 + \frac{43}{5760} = -\frac{17}{5760} \le 0$, as desired.\end{proof} 

To handle the case of weight $3$ edges we will need a couple of technical results on Bernoulli numbers; we carry them out in two subsequent lemmas.
\begin{Lemma}\label{ugly2}For $n \ge 2$, we have
\[\sum_{i = 1}^{n - 2} \frac{B_{n - i}}{i!(n - i)!} = \frac{1}{2(n - 1)!} - \frac{1}{n!}.\]
\end{Lemma}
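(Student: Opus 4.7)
The plan is to recognise the left-hand side as a truncation of the standard Bernoulli convolution and then invoke the classical recurrence for the Bernoulli numbers. As a first step I would reindex by $j = n - i$, turning the left-hand side into
\[\sum_{j=2}^{n-1}\frac{B_j}{(n-j)!\,j!} \;=\; \frac{1}{n!}\sum_{j=2}^{n-1}\binom{n}{j}B_j.\]
The problem thus reduces to evaluating the partial Bernoulli sum $\sum_{j=2}^{n-1}\binom{n}{j}B_j$.

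Next, I would appeal to the well-known recurrence $\sum_{j=0}^{n-1}\binom{n}{j}B_j = 0$, valid for $n \ge 2$. In the present context this is natural to derive on the spot: comparing coefficients of $x^n$ on the two sides of the trivial rearrangement
\[\frac{x\,e^x}{e^x - 1} \;=\; x + \frac{x}{e^x - 1},\]
where both sides are expanded using $\frac{x}{e^x-1} = \sum_{n\ge 0}\frac{B_n}{n!}x^n$, yields precisely that identity. Splitting off the two smallest indices then gives
\[\sum_{j=2}^{n-1}\binom{n}{j}B_j \;=\; -B_0 - n B_1 \;=\; -1 + \frac{n}{2} \;=\; \frac{n-2}{2}.\]

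Dividing by $n!$ produces $\frac{n-2}{2\,n!}$, and the one-line arithmetic check $\frac{n-2}{2\,n!} = \frac{n}{2\,n!} - \frac{1}{n!} = \frac{1}{2(n-1)!} - \frac{1}{n!}$ recovers the right-hand side. The degenerate case $n = 2$ is immediate since the sum on the left is empty and the right-hand side vanishes as well. There is really no obstacle: the lemma is a direct repackaging of the defining recurrence of the Bernoulli numbers, so the whole argument is bookkeeping once the reindexing is in place.
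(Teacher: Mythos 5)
Your argument is correct: the reindexing to $\frac{1}{n!}\sum_{j=2}^{n-1}\binom{n}{j}B_j$, the recurrence $\sum_{j=0}^{n-1}\binom{n}{j}B_j=0$ (with the convention $B_1=-\tfrac12$, which is the one in force here), the evaluation $-B_0-nB_1=\tfrac{n-2}{2}$, and the final simplification all check out, including the empty-sum case $n=2$. The paper proves the same identity by a single generating-function computation: the left-hand side is the coefficient of $x^n$ in
\[(e^x-1)\left(\frac{x}{e^x-1}-1+\frac{x}{2}\right)=\frac{x}{2}+1+e^x\left(\frac{x}{2}-1\right),\]
i.e.\ it convolves $e^x-1$ directly against the tail of the Bernoulli series and reads off the coefficient of the resulting elementary function. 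Your route convolves with $e^x$ instead (that is what the identity $\frac{xe^x}{e^x-1}=x+\frac{x}{e^x-1}$ encodes), so the two proofs are the same idea in slightly different packaging: yours passes through the classical finite recurrence and a splitting-off of the $j=0,1$ terms, which is a bit longer but self-contained at the level of finite sums, while the paper's is a one-line coefficient extraction.
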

\begin{proof}The left hand side is a coefficient at $x^n$ in the formal power series expansion of
\[(e^x - 1) \left(\frac{x}{e^x - 1} - 1 + \frac{x}{2}\right) = \frac{x}{2} + 1 + e^x\left(\frac{x}{2} - 1\right).\qedhere\]
\end{proof} 
\begin{Lemma}\label{ugly1}For $n\ge 3$, the Bernoulli numbers satisfy
\[\sum_{j = 2}^{n - 2}\frac{B_{n - j}}{2^{j + 1}(j + 2)!(n - j)!}+ \frac{B_{n - 1}}{24(n - 1)!} + \frac{B_n}{4n!} +\frac{B_{n + 1}}{(n + 1)!}   + \frac{B_{n + 2}}{(n + 2)!}\left(4 - \frac{1}{2^n}\right) = \frac{n + 1}{2^{n + 1}(n + 2)!}. \]\end{Lemma}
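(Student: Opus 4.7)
The plan is to proceed in the spirit of Lemma \ref{ugly2}: rewrite the left-hand side as the coefficient of $x^n$ in a suitable formal Laurent series, then extract that coefficient from a closed-form expression.

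Concretely, set $F(x) = \frac{x}{e^x - 1} = \sum_{k \ge 0} \frac{B_k}{k!} x^k$ and $H(x) = \frac{2 e^{x/2}}{x^2} = \sum_{p \ge -2} \frac{x^p}{2^{p+1}(p+2)!}$, so that
\[
[x^n]\, F(x) H(x) \;=\; \sum_{p = -2}^{n} \frac{B_{n-p}}{2^{p+1}(p+2)!(n-p)!}.
\]
The indices $p \in \{-1,0,1\}$ of this expansion produce exactly $\frac{B_{n+1}}{(n+1)!}$, $\frac{B_n}{4n!}$, $\frac{B_{n-1}}{24(n-1)!}$, and the range $2 \le p \le n-2$ matches the first explicit summation of the LHS. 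The only discrepancies are at the boundaries: the contribution at $p = -2$ equals $\frac{2 B_{n+2}}{(n+2)!}$ rather than $\bigl(4 - \frac{1}{2^n}\bigr)\frac{B_{n+2}}{(n+2)!}$, and the two extra terms at $p = n-1, n$ contribute $-\frac{1}{2^{n+1}(n+1)!} + \frac{1}{2^{n+1}(n+2)!} = -\frac{n+1}{2^{n+1}(n+2)!}$ to $[x^n]\, F(x) H(x)$ but nothing to the LHS. Consolidating these corrections yields
\[
\mathrm{LHS} \;=\; [x^n]\, F(x) H(x) \;+\; \frac{B_{n+2}}{(n+2)!}\left(2 - \frac{1}{2^n}\right) \;+\; \frac{n+1}{2^{n+1}(n+2)!}.
\]

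The identity $e^x - 1 = 2 e^{x/2} \sinh(x/2)$ immediately gives $F(x) H(x) = \frac{1}{x \sinh(x/2)}$, an even Laurent series in $x$. Its expansion follows from $\frac{1}{\sinh u} = \coth(u/2) - \coth(u)$ combined with the standard series $u \coth u = \sum_{k \ge 0} \frac{4^k B_{2k}}{(2k)!} u^{2k}$: a short computation shows that, for every $n \ge 0$,
\[
[x^n]\, \frac{1}{x \sinh(x/2)} \;=\; \frac{(1 - 2^{n+1}) B_{n+2}}{2^n (n+2)!} \;=\; -\frac{B_{n+2}}{(n+2)!}\left(2 - \frac{1}{2^n}\right),
\]
the formula being automatically true for odd $n$ since $B_{n+2} = 0$. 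Substituting back, the two $B_{n+2}$-contributions cancel and the LHS reduces exactly to $\frac{n+1}{2^{n+1}(n+2)!}$, as desired.

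The closed-form evaluation of $F(x) H(x)$ and the extraction of its Laurent coefficients are routine generating-function manipulations. The main obstacle is the first paragraph: the sum in the statement has an asymmetric range and the coefficient $4 - \frac{1}{2^n}$ of $B_{n+2}$ is written in a slightly unusual form, so one has to keep careful track of which Laurent boundary contributions are already implicit in the LHS and which require explicit compensation.
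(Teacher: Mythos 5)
Your proof is correct and follows essentially the same route as the paper: both recognize the convolution as the coefficient of $x^n$ in $\frac{x}{e^x-1}$ times (a scaled) $e^{x/2}/x^2$, simplify the product to a closed form, and read off the coefficient via classical Bernoulli-type expansions. The only cosmetic differences are that you keep the full Laurent series $\frac{2e^{x/2}}{x^2}$ and extract the coefficient of $\frac{1}{x\sinh(x/2)}$ through the $\coth$ expansion, handling the boundary terms $p=-2,n-1,n$ explicitly, whereas the paper first absorbs the $j=n-1,n$ terms into the sum and then uses the identity $\frac{2x}{e^x-1}\cdot\frac{e^{x/2}-1-x/2}{x^2}=\frac{2}{x(e^{x/2}+1)}-\frac{1}{e^x-1}$ together with Genocchi numbers.
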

\begin{proof}The first three terms on the left hand side combined give
\[\sum_{j = 0}^{n}\frac{B_{n - j}}{2^{j + 1}(j + 2)!(n - j)!} + \frac{1}{2^{n + 1}(n + 1)!} -\frac{1}{2^{n + 1}(n + 2)!}, \]
so it remains to prove that
\[\sum_{j = 0}^{n}\frac{B_{n - j}}{2^{j + 1}(j + 2)!(n - j)!}+\frac{B_{n + 1}}{(n + 1)!}   + \frac{B_{n + 2}}{(n + 2)!}\left(4 - \frac{1}{2^n}\right) = 0.\]
The first term on the left is the coefficient at $x^n$ in the formal power series expansion of
\[\frac{2x}{e^x - 1} \cdot \frac{e^{x/2} - 1 - x/2}{x^2} = \frac{2}{x(e^{x/2} + 1)} - \frac{1}{e^x - 1},\]
which is
\[\frac{2G_{n + 2}}{2^{n + 2}(n + 2)!} - \frac{B_{n + 1}}{(n + 1)!} = \frac{(1 - 2^{n + 2})B_{n + 2}}{2^n(n + 2)!} - \frac{B_{n + 1}}{(n + 1)!} = -\left(4 - \frac{1}{2^n}\right)\frac{B_{n + 2}}{(n + 2)!} - \frac{B_{n + 1}}{(n + 1)!}.\qedhere \]
\end{proof} 
We are now ready for the weight $3$ case.
\begin{Lemma}In the setting of Lemma \ref{first}, suppose $S$ has no finite-weight edges of weight $\ge 4$, and $e$ is of weight $3$. Then, in order to verify Conjecture \ref{general} for $S$, it suffices to verify it for $S^{\prime}$ obtained by changing the weight of $e$ to $2$.\end{Lemma}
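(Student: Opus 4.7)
The plan is to follow the inclusion--exclusion strategy of Lemma \ref{weight5} and the preceding weight-$4$ lemma, adapted to the ADE setting. Since $S$ has no finite-weight edges of weight $\ge 4$, the Coxeter graph of every simplex $\sigma \in S$ is a disjoint union of ADE diagrams; the connected components that can contain $e$ are of type $A_n$ $(n\ge 2)$, $D_n$ $(n\ge 4)$, $E_6$, $E_7$, or $E_8$.

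First, I would group the simplices $\sigma \ni e$ according to the type of the connected component $C_e$ of $e$ in $\Gamma_\sigma$, and express the change in $\omega(S)$ under the weight replacement as
\[\sum_{n\ge 2}\sum_{\sigma:\,A_n} \alpha_n\,\omega(\mathrm{Lk}(\sigma)) \;+\; \sum_{n\ge 4}\sum_{\sigma:\,D_n} \delta_n\,\omega(\mathrm{Lk}(\sigma)) \;+\; \sum_{j\in\{6,7,8\}}\sum_{\sigma:\,E_j} \varepsilon_j\,\omega(\mathrm{Lk}(\sigma)),\]
where the inner sums run over simplices $\sigma \in S$ containing $e$ whose Coxeter graph is exactly of the indicated type, and $\alpha_n, \delta_n, \varepsilon_j$ are combinatorial coefficients produced by inclusion--exclusion on the overcounts from larger simplices $\tau \supsetneq \sigma$, using the orders $\#W_{A_m} = (m+1)!$, $\#W_{D_m} = 2^{m-1}m!$, $\#W_{E_6}=51840$, $\#W_{E_7}=2\,903\,040$, $\#W_{E_8}=696\,729\,600$.

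Second, I would solve the recurrences for $\alpha_n$ and $\delta_n$ in closed form. Summing over the possible positions of $e$ inside an $A_n$-path yields a recurrence for $\alpha_n$ of the same shape as the one for $\beta_n$ in the preceding lemma. Introducing the generating series $\mathfrak{A}(x) = \sum_{n\ge 2}\alpha_n x^n$ and the auxiliary $\mathfrak{F}(x) = (e^x - x - 1)/x$, I would invert the recurrence to express $\mathfrak{A}(x)$ as a combination of $x/(e^x-1)$ and $x/(e^{x/2}+1)$; appealing to Lemmas \ref{ugly1} and \ref{ugly2} together with the classical facts $B_{2l+1}=0$ and $(-1)^l B_{2l}\le 0$ then gives $\alpha_{2l+1}=0$ and $(-1)^l\alpha_{2l}\le 0$ for $l\ge 1$. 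The corresponding analysis for $\delta_n$ is more delicate: because $D_n$ has a branching vertex, removing $e$ near the branch produces an extra $A$-type component, so the $\delta_n$-recurrence couples with the $\alpha_j$'s. Lemmas \ref{ugly1} and \ref{ugly2} are engineered precisely to absorb this coupling and yield $\delta_{2l+1}=0$ together with the required sign of $\delta_{2l}$.

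Third, the three exceptional coefficients $\varepsilon_6, \varepsilon_7, \varepsilon_8$ are handled by direct numerical computation from the same recurrences together with the orders of $E_6, E_7, E_8$. The parity of the number of vertices forces $\varepsilon_7$ to vanish, just as the $H_3$-coefficient vanished in the proof of Lemma \ref{weight5}, while $\varepsilon_6$ and $\varepsilon_8$ come out with signs such that, multiplied by the sign of $\omega(\mathrm{Lk}(\sigma))$ supplied by Lemma \ref{GHS} and the inductive hypothesis, every summand contributes with the sign opposite to the one predicted by Conjecture \ref{general} for $\omega(S)$. This delivers the desired sign on the total change and completes the reduction exactly as in Lemma \ref{first}. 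The main obstacle, as in the preceding lemma, is the $D_n$-bookkeeping: one must keep careful track of which arm of the $D_n$-graph the edge $e$ belongs to (the two arms are inequivalent once the branching vertex is singled out) and unfold the inclusion--exclusion so that the resulting sum is in the exact form demanded by Lemma \ref{ugly1}; once this is done, the Bernoulli-number machinery dispatches the remaining signs.
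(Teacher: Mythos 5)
Your overall strategy (inclusion--exclusion over ADE components of the graph of $W_\sigma$, generating functions, Bernoulli/Genocchi identities, direct computation for the exceptional types) is the right one and matches the paper's, but there is a genuine gap in your bookkeeping: you index the correction coefficients only by the \emph{type} of the connected component of $e$ ($\alpha_n$ for $A_n$, $\delta_n$ for $D_n$, $\varepsilon_j$ for $E_j$), whereas unlike the weight-$4$ and weight-$5$ cases the edge $e$ need not be an end-edge of the diagram, so the inclusion--exclusion correction a priori depends on \emph{where} $e$ sits inside the component. A simplex of type $A_n$ with $e$ the $t^{\mathrm{th}}$ edge from the end admits cut-offs on both sides of $e$, so the coefficients are genuinely functions of two parameters $(n,t)$ and the recurrence couples different positions; your phrase ``summing over the possible positions of $e$ inside an $A_n$-path yields a recurrence for $\alpha_n$ of the same shape as the one for $\beta_n$'' is not correct, since each simplex contributes with the coefficient attached to its own position, and one cannot average or sum over positions. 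The paper must introduce the positional types $A_n^t$, $D_n^{\prime}$, $D_n^t$, $E_n^{\prime}$, $E_n^t$, solve a two-index recurrence via the bivariate series $\fA(x,y)=\sum_n P_n(y)x^n$ with $P_n(y)=\sum_t \alpha_{n,t}y^t$, and only then \emph{derive} the nontrivial fact that $\alpha_{n,t}=\frac{B_n}{n!}$ is independent of $t$ (and analogously $\delta_{n,t}=\delta_n^{\prime}=\frac{B_n}{n!}\bigl(4-\frac{1}{2^{n-2}}\bigr)$, using Lemmas \ref{ugly1} and \ref{ugly2} there, not in the $A$-case as you suggest). Without proving this position-independence your single coefficients $\alpha_n,\delta_n,\varepsilon_j$ are not even well defined, and the reduction to ``$\alpha_{2l+1}=0$, $(-1)^l\alpha_{2l}\le 0$'' does not yet make sense.

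The same issue affects the exceptional types: for $E_6$, $E_7$, $E_8$ the edge $e$ can occupy several inequivalent positions (the vertical edge or any of the horizontal edges), and the paper verifies each case separately, finding that the differences happen to coincide ($\tfrac{13}{103680}$ for every $E_6$ position, $0$ for every $E_7$ position, $-\tfrac{2537}{696729600}$ for every $E_8$ position). Your appeal to ``parity of the number of vertices'' to force $\varepsilon_7=0$ is only a heuristic --- the vanishing for $E_7$ is the outcome of a computation in which the new top term, the $\delta_6$-corrections and the inherited $E_6$-differences cancel, not an a priori parity argument --- so you would need to carry out the position-by-position computations you defer to. Once the positional refinement is in place, the rest of your plan (signs from Lemma \ref{GHS} and the inductive hypothesis, conclusion as in Lemma \ref{first}) goes through as in the paper.
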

\begin{proof}We use a technique analogous to the previous lemma to calculate the contribution of all simplices $\sigma$ containing $e$ in formula (\ref{omega}) when the weight of $e$ is $2$ and $3$, respectively. We will need a slight modification of the argument presented there, however, because in the present case $e$ need not be the end-edge in the Coxeter graph of $W_{\sigma}$.\\

We say that $\sigma$ is of the type $A_n^t$ if the Coxeter graph $\Gamma$ of $W_{\sigma}$ is $A_n$, and $e$ is the $t^{\mathrm{th}}$ edge from the end in $\Gamma$ (note that this makes $\sigma$ be of the type $A_n^{n - t}$, too). Similarly, if $\Gamma$ is $D_n$ and $e$ is one of end-edges at the branched end of $\Gamma$, we say that $\sigma$ is of the type $D_n^{\prime}$; if $e$ is the $t^{\mathrm{th}}$, $t \ge 2$ edge from the branched end of $\Gamma$, we say that $\sigma$ is of the type $D_n^t$ (therefore, $\sigma$ can be of types $D_4^2$ and $D_4^{\prime}$ simultaneously). Types $E_6^{\prime}$, $E_7^{\prime}$, $E_8^{\prime}$ are defined analogously when $e$ is the vertical edge (in the sense of the diagrams presented in the Appendix) in graphs for $E_6, E_7, E_8$; types $E_n^t$, $6 \le n \le 8$ stand for those cases when $e$ is the $t^{\mathrm{th}}$ horizontal edge from the left (in the same sense) in the graph for $E_n$.\\

The contribution of all the summands in formula (\ref{omega}) corresponding to the simplices containing $e$, when its weight is $2$ is (the coefficients appearing in the sum will be discussed later)
\[\begin{aligned}\sum_{\sigma: A_n^t}a_{n, t}\cdot\omega(\mathrm{Lk}(\sigma)) &+ \sum_{\sigma: D_n^{\prime}}d_n^{\prime}\cdot\omega(\mathrm{Lk}(\sigma)) + \sum_{\sigma: D_n^t}d_{n, t}\cdot\omega(\mathrm{Lk}(\sigma))\\ &+ \sum_{\sigma: E_n^{\prime}}e_n^{\prime}\cdot\omega(\mathrm{Lk}(\sigma)) + \sum_{\sigma: E_n^t}e_{n, t}\cdot\omega(\mathrm{Lk}(\sigma)), \end{aligned}\]
where each $\sigma$ of type $A_n^t$ is counted either in $\sum\limits_{\sigma:A_n^t}$ or in $\sum\limits_{\sigma:A_n^{n - t}}$, and each $\sigma$ of type $D_4^{\prime}$ is counted either in $\sum\limits_{\sigma:D_4^2}$ or $\sum\limits_{\sigma:D_4^{\prime}}$, but not both (the recurrences for $a_{n, t}$ imply that $a_{n, t} = a_{n, n - t}$, similarly $d_4^{\prime} = d_{4, 2}$, so the choice does not matter; see below). When the weight of $e$ is $3$ the corresponding sum is
\[\begin{aligned}\sum_{\sigma: A_n^t}\wt{a}_{n, t}\cdot\omega(\mathrm{Lk}(\sigma)) &+ \sum_{\sigma: D_n^{\prime}}\wt{d}_n^{\prime}\cdot\omega(\mathrm{Lk}(\sigma)) + \sum_{\sigma: D_n^t}\wt{d}_{n, t}\cdot\omega(\mathrm{Lk}(\sigma))\\ &+ \sum_{\sigma: E_n^{\prime}}\wt{e}_n^{\prime}\cdot\omega(\mathrm{Lk}(\sigma)) + \sum_{\sigma: E_n^t}\wt{e}_{n, t}\cdot\omega(\mathrm{Lk}(\sigma)). \end{aligned}\]
To show that the change
\[\begin{aligned}\sum_{\sigma: A_n^t}(a_{n, t} - \widetilde{a}_{n, t})\cdot\omega(\mathrm{Lk}(\sigma)) &+ \sum_{\sigma: D_n^{\prime}}(d_n^{\prime} - \widetilde{d}_n^{\prime})\cdot\omega(\mathrm{Lk}(\sigma)) + \sum_{\sigma: D_n^t}(d_{n, t} - \widetilde{d}_{n, t})\cdot\omega(\mathrm{Lk}(\sigma)) \\ &+ \sum_{\sigma: E_n^{\prime}}(e_n^{\prime} - \widetilde{e}_n^{\prime})\cdot\omega(\mathrm{Lk}(\sigma)) + \sum_{\sigma: E_n^t}(e_{n, t} - \widetilde{e}_{n, t})\cdot\omega(\mathrm{Lk}(\sigma)) \end{aligned}\]
is of the sign opposite to the sign of $\omega(S)$ predicted by Conjecture \ref{general}, by Lemma \ref{GHS} and the assumptions it suffices to show that
\begin{align}(-1)^l(a_{2l, t} - \widetilde{a}_{2l, t}) &\le 0,\quad (a_{2l + 1, t} - \widetilde{a}_{2l + 1, t}) = 0\quad \text{for}\quad l \ge 1,\label{alpha} \\ (-1)^l(d_{2l}^{\prime} - \widetilde{d}_{2l}^{\prime}) &\le 0,\quad (d_{2l + 1}^{\prime}- \widetilde{d}_{2l + 1}^{\prime}) = 0\quad \text{for}\quad l \ge 2,\label{deltaprime} \\ (-1)^l(d_{2l, t} - \widetilde{d}_{2l, t}) &\le 0,\quad (d_{2l + 1, t} - \widetilde{d}_{2l + 1, t}) = 0\quad \text{for}\quad l \ge 2,\label{delta} \\ e_6^{\prime} - \widetilde{e}_6^{\prime} &\ge 0,\quad e_{6, t} - \widetilde{e}_{6, t} \ge 0,\label{esix} \\ e_7^{\prime} - \widetilde{e}_7^{\prime} &= 0,\quad e_{7, t} - \widetilde{e}_{7, t} = 0,\label{eseven} \\ e_8^{\prime} - \widetilde{e}_8^{\prime} &\le 0,\quad e_{8, t} - \widetilde{e}_{8, t} \le 0.\label{eeight}\end{align}\\
We turn our attention to the coefficients $a_{n, t}$ and $\widetilde{a}_{n, t}$. For $\sum\limits_{\sigma: A_n^t}a_{n, t}\cdot\omega(\mathrm{Lk}(\sigma))$ to calculate the contribution of all $\sigma$ of types $A_n^t$ correctly, $a_{n, t}$ must satisfy recurrence relations ($a_{n, t}$ corrects for the summands in which the contribution of $\sigma$ of the type $A_n^t$ was calculated incorrectly and adds the correct contribution; the logical meaning of summation variables $i$ and $j$, looking at the sketch of the Coxeter graph for $A_n$ in the Appendix and assuming $e$ is the $t^{\mathrm{th}}$ from the left, is the number of nodes that are cut-off from the left and the right, respectively, to get the new graph, the previously counted contribution of which the summand adjusts; the argument is a generalized version of the one used in the proof of Lemma \ref{weight5})
\[a_{n, t} = \underset{i + j \neq 0}{\sum_{i = 0}^{t - 1}\sum_{j = 0}^{n - t - 1}}\frac{(-1)^{i + j + 1}a_{n - i - j, t - i}}{(i + 1)!(j + 1)!} + \frac{(-1)^n}{(t + 1)!(n - t + 1)!}.\]
Similarly,
\[\widetilde{a}_{n, t} = \underset{i + j \neq 0}{\sum_{i = 0}^{t - 1}\sum_{j = 0}^{n - t - 1}}\frac{(-1)^{i + j + 1}\widetilde{a}_{n - i - j, t - i}}{(i + 1)!(j + 1)!} + \frac{(-1)^n}{(n + 1)!}.\]
Set $\alpha_{n, t} = (-1)^n(a_{n, t} - \widetilde{a}_{n, t})$. The recurrence relation
\[\begin{aligned}\alpha_{n, t} &= -\underset{i + j \neq 0}{\sum_{i = 0}^{t - 1}\sum_{j = 0}^{n - t - 1}}\frac{\alpha_{n - i - j, t - i}}{(i + 1)!(j + 1)!} + \frac{1}{(t + 1)!(n - t + 1)!} - \frac{1}{(n + 1)!}\\ &= -\sum_{s = 1}^{n - 2}\sum_{\substack{0\le i \le s \\ 1\le t - i \le n - s - 1}}\frac{\alpha_{n - s, t - i}}{(i + 1)!(s - i + 1)!} + \frac{1}{(t + 1)!(n - t + 1)!} - \frac{1}{(n + 1)!}\end{aligned}\]
yields a recurrence for the polynomials $P_n(y) = \alpha_{n, 1}y + \alpha_{n, 2}y^2 + \dotsb + \alpha_{n, n - 1}y^{n - 1}$. For $n\ge 2$,
\[ P_n(y) = -\frac{1}{y}\sum_{s = 1}^{n - 2}P_{n - s}(y)\frac{(1 + y)^{s + 2} - y^{s + 2} - 1}{(s + 2)!} + \sum_{t = 1}^{n - 1}\left(\frac{y^t}{(t + 1)!(n - t + 1)!} - \frac{y^t}{(n + 1)!}\right), \]
where the second sum equals
\[\frac{(1 + y)^{n + 2} - 1 - (n + 2)y - (n + 2)y^{n + 1} - y^{n + 2}}{y(n + 2)!} - \frac{1}{(n + 1)!}\frac{y^n - y}{y - 1},\]
which is
\[\frac{(1 + y)^{n + 2} - 1 - y^{n + 2}}{y(n + 2)!} - \frac{1}{(n + 1)!}\frac{y^{n+1} - 1}{y - 1}.\]
Letting $\mathfrak{G}(x) = \frac{x^3}{3!} + \frac{x^4}{4!} + \dotsb = e^x - 1 - x - \frac{x^2}{2}$ be an auxiliary formal power series, we conclude that the generating formal power series $\fA(x, y) = P_2(y)x^2 + P_3(y)x^3 + \dotsb$ satisfies the recurrence relation
\[\begin{split}\fA(x, y) = &-\frac{1}{x^2y}\fA(x, y)(\fG(x(y + 1))-\mathfrak{G}(xy)- \mathfrak{G}(x)) + \frac{\fG(x) - \fG(xy)}{x(y - 1)}\\ &+ \frac{1}{x^2y}\left(\fG(x(y + 1)) - \frac{(x(y + 1))^3}{6} - \fG(x) +  \frac{x^3}{6} - \fG(xy) + \frac{(xy)^3}{6}\right).\end{split}\]
But $\fG(x(y + 1))-\mathfrak{G}(xy)- \mathfrak{G}(x) = e^{x(y + 1)} - e^{xy} - e^{x} + 1 - x^2y$, so \[\begin{aligned}\frac{\fA(x, y)(e^{xy} - 1)(e^x - 1)}{x^2y} &= \frac{\fG(x)- \fG(xy)}{x(y - 1)} + \frac{(e^{xy} - 1)(e^x - 1)}{x^2y} - 1 - \frac{xy}{2} - \frac{x}{2}\\
&= \frac{e^x - e^{xy}}{x(y - 1)} + 1 + \frac{x(y + 1)}{2} +\frac{(e^{xy} - 1)(e^x - 1)}{x^2y}- 1 - \frac{xy}{2} - \frac{x}{2}.\end{aligned}\]
We conclude that
\[ \fA(x, y) = 1 + \frac{xy}{y - 1}\left(\frac{1}{e^{xy} - 1} - \frac{1}{e^x - 1}\right) = 1 + \sum_{n = 0}^{\infty} \frac{B_n(y^n - y)}{n!(y - 1)}x^n, \]
and $P_n(y) = \frac{B_n}{n!}(y + y^2 + \dotsb + y^{n - 1})$. Hence $\alpha_{n, t} = \frac{B_n}{n!}$ is independent of $t$, and has the same sign as the $n^{\mathrm{th}}$ Bernoulli number $B_n$; odd Bernoulli numbers vanish, whereas $(-1)^lB_{2l}\le 0$, (\ref{alpha}) follows.\\

Consider the numbers $d_n^{\prime}$ and $\wt{d}_n^{\prime}$ and let $\delta_n^{\prime} = (-1)^n(d_n^{\prime}-\widetilde{d}_n^{\prime})$. The recurrence relations (the first and the second sums correspond to subgraphs containing both forked edges and only one forked edge, respectively)
\[\begin{aligned}d_n^{\prime} &= \left(\sum_{i = 1}^{n - 4}(-1)^{i + 1}\frac{d_{n - i}^{\prime}}{(i + 1)!} + \frac{(-1)^{(n - 3) + 1}a_{3, 1}}{(n - 2)!}\right) + \sum_{i = 0}^{n - 3}(-1)^i\frac{a_{n - i - 1, 1}}{2\cdot (i + 1)!} + \frac{(-1)^n}{2\cdot n!}, \\
\widetilde{d}_n^{\prime} &= \left(\sum_{i = 1}^{n - 4}(-1)^{i + 1}\frac{\widetilde{d}_{n - i}^{\prime}}{(i + 1)!} + \frac{(-1)^{(n - 3) + 1}a_{3, 1}}{(n - 2)!}\right) + \sum_{i = 0}^{n - 3}(-1)^i\frac{\wt{a}_{n - i - 1, 1}}{2\cdot (i + 1)!} + \frac{(-1)^n}{2^{n - 1}\cdot n!},\end{aligned}\]
together with $\alpha_{n, 1} = \frac{B_n}{n!}$ give
\[\Ba\delta_n^{\prime} &= -\sum_{i = 1}^{n - 4}\frac{\delta_{n - i}^{\prime}}{(i + 1)!} - \frac{1}{2}\sum_{i = 0}^{n - 3}\frac{B_{n - i - 1}}{(n - i - 1)! (i + 1)!} + \frac{1}{2\cdot n!} - \frac{1}{2^{n - 1}\cdot n!} \\
&= -\sum_{i = 1}^{n - 4}\frac{\delta_{n - i}^{\prime}}{(i + 1)!} - \frac{1}{4(n - 1)!} + \frac{1}{n!} - \frac{1}{2^{n - 1}n!},\Ea\]
where we have made use of Lemma \ref{ugly2} in the last equality. Recall the auxiliary series $\fF(x) = \frac{x}{2!} + \frac{x^2}{3!} + \dotsb = \frac{e^x - x - 1}{x}$; the generating formal power series $\fE(x) = \delta_4^{\prime}x^4 + \delta_5^{\prime}x^5 + \dotsb$ satisfies
\[\begin{aligned}
\fE(x) &= -\fE(x)\fF(x) - \frac{x}{4}\left(e^x - 1 - x - \frac{x^2}{2}\right) + \left(e^x - \sum_{n = 0}^3\frac{x^n}{n!}\right) - 2\left(e^{x/2} - \sum_{n = 0}^3 \frac{x^n}{2^nn!}\right), \\
\fE(x) &= \frac{x}{e^x - 1}\left( -\frac{x}{4}e^x + \frac{x}{4} + \frac{x^2}{4} + \frac{x^3}{8} + e^x - 2e^{x/2} + 1 - \frac{x^2}{4} - \frac{x^3}{8}\right) \\ &= \frac{x}{e^x - 1}\left(\frac{x(1 - e^x)}{4} + (e^{x/2} - 1)^2\right) = -\frac{x^2}{4} + x - \frac{2x}{e^{x/2} + 1}. \end{aligned}\]
We encounter the Genocchi numbers $\frac{2y}{e^y + 1} = \sum\limits_{n = 0}^{\infty}\frac{G_ny^n}{n!}$ again, and conclude that $\delta_n^{\prime} = -\frac{2G_n}{2^nn!} = -\frac{2(1 - 2^n)B_n}{2^{n - 1}n!} = \frac{B_n}{n!}\left(4 - \frac{1}{2^{n -2}}\right)$ has the same sign as the $n^{\mathrm{th}}$ Bernoulli number $B_n$; this proves (\ref{deltaprime}).\\

In fact, we will prove that $(-1)^n(d_{n, t} - \widetilde{d}_{n, t}) = \frac{B_n}{n!}\left(4 - \frac{1}{2^{n -2}}\right)$, too, and this will settle (\ref{delta}). We begin with the case $t = 2$. The recurrence relations are (the sums correspond to subgraphs containing both, just one, and neither of the forked edges, respectively)
\[\begin{aligned}d_{n, 2} &= \sum_{i = 1}^{n - 4}(-1)^{i + 1}\frac{d_{n - i, 2}}{(i + 1)!} + 2\sum_{i = 0}^{n - 4}(-1)^i\frac{a_{n - i - 1, 2}}{2(i + 1)!} + \sum_{i = 0}^{n - 4}(-1)^{i + 1}\frac{a_{n - i - 2, 1}}{4(i + 1)!}+\frac{(-1)^n}{24(n - 2)!},\\ \widetilde{d}_{n, 2} &= \sum_{i = 1}^{n - 4}(-1)^{i + 1}\frac{\widetilde{d}_{n - i, 2}}{(i + 1)!} + 2\sum_{i = 0}^{n - 4}(-1)^i\frac{\widetilde{a}_{n - i - 1, 2}}{2(i + 1)!} + \sum_{i = 0}^{n - 4}(-1)^{i + 1}\frac{\widetilde{a}_{n - i - 2, 1}}{4(i + 1)!}+\frac{(-1)^n}{2^{n - 1}n!}.\end{aligned}\]
Letting $\delta_{n, t} = (-1)^n(d_{n, t} - \widetilde{d}_{n, t})$ we see that
\[\Ba\delta_{n, 2} &= -\sum_{i = 1}^{n - 4}\frac{\delta_{n - i, 2}}{(i + 1)!} - \sum_{i = 0}^{n - 4}\frac{\alpha_{n - i - 1, 2}}{(i + 1)!} - \sum_{i = 0}^{n - 4} \frac{\alpha_{n - i - 2, 1}}{4(i + 1)!} + \frac{1}{24(n - 2)!} - \frac{1}{2^{n - 1}n!} \\
&= -\sum_{i = 1}^{n - 4}\frac{\delta_{n - i, 2}}{(i + 1)!}-\sum_{i = 1}^{n - 2}\frac{B_{n - i}}{(n - i)!i!} + \frac{B_2}{2(n - 2)!} - \frac{1}{4}\sum_{i = 1}^{n - 3}\frac{B_{n - i - 1}}{(n - i - 1)!i!} + \frac{1}{24(n - 2)!} - \frac{1}{2^{n - 1}n!} \\
&= -\sum_{i = 1}^{n - 4}\frac{\delta_{n - i, 2}}{(i + 1)!}-\frac{1}{2(n - 1)!}+ \frac{1}{n!} - \frac{1}{8(n - 2)!} + \frac{1}{4(n -1 )!} + \frac{1}{8(n - 2)!} - \frac{1}{2^{n - 1}n!}\\
&= -\sum_{i = 1}^{n - 4}\frac{\delta_{n - i, 2}}{(i + 1)!} - \frac{1}{4(n - 1)!} + \frac{1}{n!} - \frac{1}{2^{n - 1}n!},\Ea\]
where we have used Lemma \ref{ugly2}. Since the recurrence relation for the numbers $\delta_{n, 2}$ is identical to that for $\delta_n^{\prime}$, we conclude that $\delta_{n, 2} = \delta_n^{\prime} = \frac{B_n}{n!}\left(4 - \frac{1}{2^{n - 2}}\right)$, as desired.\\

The general case $n - 2\ge t\ge 3$ is handled similarly, though the relations are slightly different (the sums correspond to cases when the subgraph has both forked edges, one forked edge, none forked edges but contains the branch point, none forked edges and ends on the right side one edge off the branch point, none forked edges and ends at the right side $j + 1$ edges off the branch point, respectively):
\[\begin{aligned}d_{n, t} = &\sum_{i = 1}^{n - t - 2}(-1)^{i + 1}\frac{d_{n - i, t}}{(i + 1)!} + 2\sum_{i = 0}^{n - t - 2}(-1)^i\frac{a_{n - i - 1, t}}{2(i + 1)!} + \sum_{i = 0}^{n - t - 2}(-1)^{i + 1}\frac{a_{n - i - 2, t-1}}{4(i + 1)!}+ \\ &\sum_{i = 0}^{n - t - 2}(-1)^i\frac{a_{n - i - 3, t - 2}}{24(i + 1)!} + \sum_{i = 0}^{n - t - 2}\sum_{j = 1}^{t - 3}(-1)^{i + j} \frac{a_{n - i - j - 3, t - j - 2}}{2^{j + 2}(j + 3)!(i + 1)!}+ \frac{(-1)^n}{2^t(t + 1)!(n - t)!},\\ \widetilde{d}_{n, t} = &\sum_{i = 1}^{n - t - 2}(-1)^{i + 1}\frac{\widetilde{d}_{n - i, t}}{(i + 1)!} + 2\sum_{i = 0}^{n - t - 2}(-1)^i\frac{\widetilde{a}_{n - i - 1, t}}{2(i + 1)!} + \sum_{i = 0}^{n - t - 2}(-1)^{i + 1}\frac{\widetilde{a}_{n - i - 2, t-1}}{4(i + 1)!}+ \\ &\sum_{i = 0}^{n - t - 2}(-1)^i\frac{\widetilde{a}_{n - i - 3, t - 2}}{24(i + 1)!} + \sum_{i = 0}^{n - t - 2}\sum_{j = 1}^{t - 3}(-1)^{i + j} \frac{\widetilde{a}_{n - i - j - 3, t - j - 2}}{2^{j + 2}(j + 3)!(i + 1)!}+ \frac{(-1)^n}{2^{n - 1}n!}.\end{aligned}\]
Therefore,
\[\begin{aligned}\delta_{n, t} = &-\sum_{i = 1}^{n - t - 2}\frac{\delta_{n - i, t}}{(i + 1)!} - \sum_{i = 0}^{n - t - 2}\frac{\alpha_{n - i - 1, t}}{(i + 1)!} - \sum_{i = 0}^{n - t - 2}\frac{\alpha_{n - i - 2, t-1}}{4(i + 1)!} - \sum_{i = 0}^{n - t - 2}\frac{\alpha_{n - i - 3, t - 2}}{24(i + 1)!}\\ &- \sum_{i = 0}^{n - t - 2}\sum_{j = 1}^{t - 3} \frac{\alpha_{n - i - j - 3, t - j - 2}}{2^{j + 2}(j + 3)!(i + 1)!}+ \frac{1}{2^t(t + 1)!(n - t)!} - \frac{1}{2^{n - 1}n!}.\end{aligned}\]
We show that $\delta_{t + 2, t} = \frac{B_{t + 2}}{(t + 2)!}\left(4 - \frac{1}{2^t}\right)$, and $\delta_{n, t + 1} = \delta_{n, t}$ for $n - 2\ge t + 1$, $t \ge 3$. These two claims combined will give $\delta_{n, t} = \delta_{n, n - 2} = \frac{B_{n}}{n!}\left(4 - \frac{1}{2^{n - 2}}\right)$ and (\ref{delta}) will be settled. For the first one,
\[\begin{aligned}\delta_{t + 2, t} &= -\alpha_{t + 1, t} - \frac{\alpha_{t, t - 1}}{4} - \frac{\alpha_{t - 1, t - 2}}{24} - \sum_{j = 1}^{t - 3}\frac{\alpha_{t - j - 1, t - j - 2}}{2^{j + 2}(j + 3)!} + \frac{1}{2^{t + 1}(t + 1)!} - \frac{1}{2^{t + 1}(t + 2)!}\\ &= -\frac{B_{t + 1}}{(t + 1)!} - \frac{B_t}{4t!} - \frac{B_{t - 1}}{24(t - 1)!} - \sum_{j = 1}^{t - 3}\frac{B_{t - j - 1}}{2^{j + 2}(j + 3)!(t - j - 1)!} + \frac{t + 1}{2^{t + 1}(t + 2)!},\end{aligned}\]
which equals $\frac{B_{t + 2}}{(t + 2)!}\left(4 - \frac{1}{2^{t}}\right)$ by Lemma \ref{ugly1}. For the second one,
\[\begin{aligned} \delta_{n, t + 1} - \delta_{n, t} = &\frac{\delta_{t + 2, t}}{(n - t - 1)!} + \frac{\alpha_{t + 1, t}}{(n - t - 1)!} + \frac{\alpha_{t, t - 1}}{4(n - t - 1)!} + \frac{\alpha_{t - 1, t - 2}}{24(n - t - 1)!} +\\ &\sum_{j = 1}^{t - 3}\frac{\alpha_{t - j - 1, t - j - 2}}{2^{j + 2}(j + 3)!(n - t - 1)!} - \sum_{i = 0}^{n - t - 3} \frac{\alpha_{n - t - i - 1, 1}}{2^t(t + 1)!(i + 1)!}  + \frac{n - t - 2(t + 2)}{2^{t + 1}(t + 2)!(n - t)!},\end{aligned}\]
and employing Lemma \ref{ugly1} again, $2^t(t + 1)!(\delta_{n, t + 1} - \delta_{n, t})$ equals
\[\begin{aligned}\frac{t + 1}{2(t + 2)(n - t - 1)!} - \sum_{i = 0}^{n - t - 3} \frac{\alpha_{n - t - i - 1, 1}}{(i + 1)!} + \frac{n - t - 2(t + 2)}{2(t + 2)(n - t)!},\end{aligned}\]
which is zero by Lemma \ref{ugly2}. We have therefore established (\ref{delta}).\\

Finally, we analyze the sporadic cases. In the calculations we will need the numerical values of the first few Bernoulli numbers; we record them here: $B_0 = 1$, $B_1 = -\frac{1}{2}$, $B_2 = \frac{1}{6}$, $B_3 = 0$, $B_4 = -\frac{1}{30}$, $B_5 = 0$, $B_6 = \frac{1}{42}$, etc. \\

For type $E_6^{\prime}$ the relations are
\[\begin{aligned}e_6^{\prime} &= -\frac{1}{36}a_{2, 1} + \frac{2}{12}a_{3, 1} - \frac{2}{6}a_{4, 1} - \frac{1}{4}d_{4, 2} + \frac{1}{2}d_5^{\prime} + \frac{1}{2\cdot 6!}, \\ \widetilde{e}_6^{\prime} &= -\frac{1}{36}\widetilde{a}_{2, 1} + \frac{2}{12}\widetilde{a}_{3, 1} - \frac{2}{6}\widetilde{a}_{4, 1} - \frac{1}{4}\widetilde{d}_{4, 2} + \frac{1}{2}\widetilde{d}_5^{\prime} + \frac{1}{2^7\cdot 3^4 \cdot 5},\end{aligned}\]
and
\[ e_6^{\prime} - \widetilde{e}_6^{\prime} = -\frac{1}{36}\frac{B_2}{2} - \frac{1}{3}\frac{B_4}{24} - \frac{1}{4}\frac{B_4}{24}\left(4 - \frac{1}{4}\right) + \frac{7}{10368} = \frac{13}{103680} \ge 0. \]
Similarly,
\[\begin{aligned}e_{6, 1} &= -\frac{1}{120}a_{2, 1} + \frac{1}{12}a_{3, 1} - \frac{1}{4}a_{4, 1} - \frac{1}{6}a_{4, 1} + \frac{1}{2}a_{5, 1} + \frac{1}{2}d_{5, 3} + \frac{1}{2\cdot 2^4\cdot 5!}, \\ \widetilde{e}_{6, 1} &= -\frac{1}{120}\widetilde{a}_{2, 1} + \frac{1}{12}\widetilde{a}_{3, 1} - \frac{1}{4}\widetilde{a}_{4, 1} - \frac{1}{6}\widetilde{a}_{4, 1} + \frac{1}{2}\widetilde{a}_{5, 1} + \frac{1}{2}\widetilde{d}_{5, 3} + \frac{1}{2^7\cdot 3^4 \cdot 5},\end{aligned}\]
\[e_{6, 1} - \widetilde{e}_{6, 1} = -\frac{1}{120}\frac{B_2}{2} - \frac{1}{4}\frac{B_4}{24} - \frac{1}{6}\frac{B_4}{24} + \frac{5}{20736} = \frac{13}{103680} \ge 0.\]
\[\begin{aligned}e_{6, 2} &= -\frac{1}{24}a_{2, 1} + \frac{2}{2\cdot 6}a_{3, 1} + \frac{1}{8}a_{3, 1} - \frac{1}{6}a_{4, 1} - \frac{2}{4}a_{4, 1} - \frac{1}{4}d_{4, 2} + \frac{1}{2}a_{5, 2} + \frac{1}{2}d_{5, 2} + \frac{1}{2}d_5^{\prime} + \frac{1}{720}, \\ \widetilde{e}_{6, 2} &= -\frac{1}{24}\widetilde{a}_{2, 1} + \frac{2}{2\cdot 6}\widetilde{a}_{3, 1} + \frac{1}{8}\widetilde{a}_{3, 1} - \frac{1}{6}\widetilde{a}_{4, 1} - \frac{2}{4}\widetilde{a}_{4, 1} - \frac{1}{4}\widetilde{d}_{4, 2} + \frac{1}{2}\widetilde{a}_{5, 2} + \frac{1}{2}\widetilde{d}_{5, 2} + \frac{1}{2}\widetilde{d}_5^{\prime} + \frac{1}{2^7\cdot 3^4 \cdot 5},\end{aligned}\]
\[e_{6, 2} - \widetilde{e}_{6, 2} = -\frac{1}{24}\frac{B_2}{2}-\frac{1}{6}\frac{B_4}{24} - \frac{1}{2}\frac{B_4}{24} - \frac{1}{4}\frac{B_4}{24}\left(4 - \frac{1}{4}\right) + \frac{71}{51840} = \frac{13}{103680} \ge 0.\]
By symmetry $e_{6, 3} - \widetilde{e}_{6, 3} = e_{6, 2}- \widetilde{e}_{6, 2}$, $e_{6, 4} - \widetilde{e}_{6, 4} = e_{6, 1}- \widetilde{e}_{6, 1}$, so this establishes (\ref{esix}).\\

We proceed with types $E_7^{\prime}$ and $E_7^t$; as the method of calculation is clear by now, for the sake of brevity we consider the differences directly, skipping summands corresponding to odd number of vertices which vanish by the above.
\begin{align*} e_7^{\prime} - \widetilde{e}_7^{\prime} &= \frac{1}{6\cdot 24} \alpha_{2, 1} + \frac{1}{24}\alpha_{4, 1} + \frac{1}{12}\delta_4^{\prime} + \frac{1}{12}\alpha_{4, 1} + \frac{1}{2}(e_6^{\prime} - \widetilde{e}_6^{\prime}) + \frac{1}{2}\delta_6^{\prime} - \frac{1}{2\cdot 7!} + \frac{1}{2^{10}\cdot 3^4\cdot 5\cdot 7} \\ &= \frac{1}{6\cdot 24} \frac{B_2}{2} + \frac{1}{8}\frac{B_4}{24} + \frac{1}{12}\frac{B_4}{24}\left(4 - \frac{1}{4}\right) + \frac{1}{2}\frac{13}{103680} + \frac{1}{2}\frac{B_6}{6!}\left(4 - \frac{1}{2^4}\right) - \frac{41}{414720} = 0,\\
e_{7, 1} - \widetilde{e}_{7, 1} &= \frac{1}{6!}\alpha_{2, 1} + \frac{1}{24}\alpha_{4, 1} + \frac{1}{12}\alpha_{4, 1} + \frac{1}{2}\alpha_{6, 1} + \frac{1}{2}(e_{6, 1} - \widetilde{e}_{6, 1}) - \frac{1}{2\cdot 2^5 \cdot 6!} + \frac{1}{2^{10}\cdot 3^4\cdot 5\cdot 7} \\ &= \frac{1}{6!}\frac{B_2}{2} + \frac{1}{8}\frac{B_4}{24} + \frac{1}{2}\frac{B_6}{6!} + \frac{1}{2}\frac{13}{103680} - \frac{31}{1451520} = 0,\\
e_{7, 2} - \widetilde{e}_{7, 2} &= \frac{1}{96}\alpha_{2, 1} + \frac{1}{24}\alpha_{4, 2} + \frac{1}{12}\alpha_{4, 2} + \frac{1}{8}\alpha_{4, 1} + \frac{1}{12}\delta_{4, 2} + \frac{1}{2}\alpha_{6, 2} + \frac{1}{2}(e_{6, 2} - \widetilde{e}_{6, 2}) + \frac{1}{2}\delta_6^{\prime} \\ &-\frac{1}{6\cdot 6!} + \frac{1}{2^{10}\cdot 3^4\cdot 5\cdot 7} = \frac{1}{96}\frac{B_2}{2} + \frac{1}{4}\frac{B_4}{24} + \frac{1}{12}\frac{B_4}{4!}\left(4 - \frac{1}{4}\right) + \frac{1}{2}\frac{B_6}{6!} + \frac{1}{2}\frac{13}{103680} \\ &+ \frac{1}{2}\frac{B_6}{6!}\left(4 - \frac{1}{2^4}\right) - \frac{671}{2903040} = 0,\\
e_{7, 3} - \widetilde{e}_{7, 3} &= \frac{1}{72}\alpha_{2, 1} + \frac{2}{12}\alpha_{4, 1} + \frac{1}{8}\alpha_{4, 2} + \frac{1}{12}\alpha_{4, 2} + \frac{1}{12}\delta_{4, 2} + \frac{1}{2}\alpha_{6, 3} + \frac{1}{2}(e_{6, 2} - \widetilde{e}_{6, 2}) + \frac{1}{2}\delta_{6, 2} \\ &-\frac{1}{24\cdot 120} + \frac{1}{2^{10}\cdot 3^4\cdot 5\cdot 7} = \frac{1}{72}\frac{B_2}{2} + \frac{3}{8}\frac{B_4}{24} + \frac{1}{12}\frac{B_4}{4!}\left(4 - \frac{1}{4}\right) + \frac{1}{2}\frac{B_6}{6!}\\ &+ \frac{1}{2}\frac{13}{103680} + \frac{1}{2}\frac{B_6}{6!}\left(4 - \frac{1}{2^4}\right) - \frac{1007}{2903040} = 0, \\
e_{7, 4} - \widetilde{e}_{7, 4} &= \frac{1}{240}\alpha_{2, 1} + \frac{1}{12}\alpha_{4, 2} + \frac{1}{8}\alpha_{4, 1} + \frac{1}{12}\alpha_{4, 1} + \frac{1}{2}\alpha_{6, 2} + \frac{1}{2}(e_{6, 1} - \widetilde{e}_{6, 1}) + \frac{1}{2}\delta_{6, 3}\\ &-\frac{1}{6\cdot 2^4 \cdot 5!} + \frac{1}{2^{10}\cdot 3^4\cdot 5\cdot 7} = \frac{1}{240}\frac{B_2}{2} + \frac{7}{24}\frac{B_4}{24} + \frac{1}{2}\frac{B_6}{6!} + \frac{1}{2}\frac{13}{103680}\\ &+ \frac{1}{2}\frac{B_6}{6!}\left(4 - \frac{1}{2^4}\right) - \frac{251}{2903040} = 0,\\
e_{7, 5} - \widetilde{e}_{7, 5} &= \frac{1}{2^4\cdot 5!}\alpha_{2, 1} + \frac{1}{12}\alpha_{4, 1} + \frac{1}{2}\alpha_{6, 1} + \frac{1}{2}\delta_{6, 4} - \frac{1}{2\cdot 2^7 \cdot 3^4 \cdot 5} + \frac{1}{2^{10}\cdot 3^4\cdot 5\cdot 7}\\ &= \frac{1}{1920}\frac{B_2}{2} + \frac{1}{12}\frac{B_4}{24} + \frac{1}{2}\frac{B_6}{6!} + \frac{1}{2}\frac{B_6}{6!}\left(4 - \frac{1}{2^4}\right) - \frac{1}{107520} = 0,\end{align*}
and this settles (\ref{eseven}).\\

Finally, we turn our attention to $E_8^{\prime}$ and $E_8^t$.
\begin{align*} e_8^{\prime} - \widetilde{e}_8^{\prime} &= -\frac{1}{6\cdot 120} \alpha_{2, 1} - \frac{1}{120}\alpha_{4, 1} - \frac{1}{48}\delta_4^{\prime} - \frac{1}{36}\alpha_{4, 1} - \frac{1}{6}(e_6^{\prime} - \widetilde{e}_6^{\prime}) - \frac{1}{4}\delta_6^{\prime} - \frac{1}{6}\alpha_{6, 1} + \frac{1}{2\cdot 8!}\\ &- \frac{1}{2^{14}\cdot 3^5\cdot 5^2\cdot 7} = -\frac{1}{720} \frac{B_2}{2} - \frac{13}{360}\frac{B_4}{24} - \frac{1}{48}\frac{B_4}{24}\left(4 - \frac{1}{4}\right) - \frac{1}{6}\frac{13}{103680}\\ &- \frac{1}{4}\frac{B_6}{6!}\left(4 - \frac{1}{2^4}\right) - \frac{1}{6}\frac{B_6}{6!} + \frac{8639}{696729600} = -\frac{2537}{696729600}\le 0,\\
e_{8, 1} - \widetilde{e}_{8, 1} &= -\frac{1}{7!} \alpha_{2, 1} - \frac{1}{120}\alpha_{4, 1} - \frac{1}{48}\alpha_{4, 1} - \frac{1}{6}(e_{6, 1} - \widetilde{e}_{6, 1}) - \frac{1}{4}\alpha_{6, 1} + \frac{1}{2\cdot 2^6\cdot 7!} - \frac{1}{2^{14}\cdot 3^5\cdot 5^2\cdot 7}\\ &= -\frac{1}{5040} \frac{B_2}{2} - \frac{7}{240}\frac{B_4}{24} - \frac{1}{6}\frac{13}{103680} - \frac{1}{4}\frac{B_6}{6!} + \frac{1079}{696729600} = -\frac{2537}{696729600}\le 0,\\
e_{8, 2} - \widetilde{e}_{8, 2} &= -\frac{1}{4\cdot 120}\alpha_{2, 1} - \frac{1}{120}\alpha_{4, 2} - \frac{1}{48}\alpha_{4, 2} - \frac{1}{24}\alpha_{4, 1} - \frac{1}{48}\delta_{4, 2} - \frac{1}{4}\alpha_{6, 2} - \frac{1}{4}\alpha_{6, 1} - \frac{1}{6}(e_{6, 2} - \widetilde{e}_{6, 2})\\ &- \frac{1}{4}\delta_6^{\prime} +\frac{1}{6\cdot 7!} - \frac{1}{2^{14}\cdot 3^5\cdot 5^2\cdot 7} = -\frac{1}{480}\frac{B_2}{2} - \frac{17}{240}\frac{B_4}{24} - \frac{1}{48}\frac{B_4}{4!}\left(4 - \frac{1}{4}\right) - \frac{1}{2}\frac{B_6}{6!}\\ &-\frac{1}{6}\frac{13}{103680} - \frac{1}{4}\frac{B_6}{6!}\left(4 - \frac{1}{2^4}\right) + \frac{23039}{696729600} = -\frac{2537}{696729600} \le 0,\\
e_{8, 3} - \widetilde{e}_{8, 3} &= -\frac{1}{2\cdot 6 \cdot 24}\alpha_{2, 1} - \frac{1}{48}\alpha_{4, 1} - \frac{1}{48}\delta_{4, 2} - \frac{1}{36}\alpha_{4, 2} - \frac{1}{24}\alpha_{4, 2} - \frac{1}{24}\alpha_{4, 1} - \frac{1}{4}\alpha_{6, 3} - \frac{1}{6}(e_{6, 2} - \widetilde{e}_{6, 2})\\ &- \frac{1}{4}\delta_{6, 2} - \frac{1}{4}\alpha_{6, 2} - \frac{1}{6}\alpha_{6, 2} +\frac{1}{120\cdot 120} - \frac{1}{2^{14}\cdot 3^5\cdot 5^2\cdot 7} = -\frac{1}{288}\frac{B_2}{2} - \frac{19}{144}\frac{B_4}{24} - \frac{2}{3}\frac{B_6}{6!}\\ &- \frac{1}{48}\frac{B_4}{4!}\left(4 - \frac{1}{4}\right)  - \frac{1}{4}\frac{B_6}{6!}\left(4 - \frac{1}{2^4}\right) - \frac{1}{6}\frac{13}{103680} + \frac{48383}{696729600} = -\frac{2537}{696729600} \le 0, \\
e_{8, 4} - \widetilde{e}_{8, 4} &= -\frac{1}{6\cdot 120}\alpha_{2, 1} - \frac{1}{120}\alpha_{4, 1} - \frac{1}{24}\alpha_{4, 2} - \frac{1}{24}\alpha_{4, 1} - \frac{1}{36}\alpha_{4, 1} - \frac{1}{4}\alpha_{6, 3} - \frac{1}{6}\alpha_{6, 3} - \frac{1}{4}\delta_{6, 3} - \frac{1}{4}\alpha_{6, 2} \\ &- \frac{1}{6}(e_{6, 1} - \widetilde{e}_{6, 1}) +\frac{1}{24\cdot 2^4 \cdot 5!} - \frac{1}{2^{14}\cdot 3^5\cdot 5^2\cdot 7} = -\frac{1}{720}\frac{B_2}{2} - \frac{43}{360}\frac{B_4}{24} - \frac{2}{3}\frac{B_6}{6!} - \frac{1}{6}\frac{13}{103680}\\ &- \frac{1}{4}\frac{B_6}{6!}\left(4 - \frac{1}{2^4}\right) + \frac{15119}{696729600} = -\frac{2537}{696729600} \le 0,\\
e_{8, 5} - \widetilde{e}_{8, 5} &= -\frac{1}{2\cdot 2^4\cdot 5!}\alpha_{2, 1} - \frac{1}{120}\alpha_{4, 2} - \frac{1}{24}\alpha_{4, 1} - \frac{1}{4}\alpha_{6, 2} - \frac{1}{6}\alpha_{6, 2} - \frac{1}{4}\delta_{6, 4} - \frac{1}{4}\alpha_{6, 1} + \frac{1}{6\cdot 2^7 \cdot 3^4 \cdot 5}\\ &- \frac{1}{2^{14}\cdot 3^5\cdot 5^2\cdot 7} = -\frac{1}{3840}\frac{B_2}{2} - \frac{1}{20}\frac{B_4}{24} - \frac{2}{3}\frac{B_6}{6!} - \frac{1}{4}\frac{B_6}{6!}\left(4 - \frac{1}{2^4}\right) + \frac{2239}{696729600}\\ &= -\frac{2537}{696729600} \le 0,\\
e_{8, 6} - \widetilde{e}_{8,6} &= -\frac{1}{2^7\cdot 3^4\cdot 5}\alpha_{2, 1} - \frac{1}{120}\alpha_{4, 1} - \frac{1}{4}\alpha_{6, 1} - \frac{1}{6}\alpha_{6, 1} + \frac{1}{2\cdot 2^{10} \cdot 3^4 \cdot 5\cdot 7} - \frac{1}{2^{14}\cdot 3^5\cdot 5^2\cdot 7}\\ &= -\frac{1}{51840}\frac{B_2}{2} - \frac{1}{120}\frac{B_4}{24} - \frac{5}{12}\frac{B_6}{6!} + \frac{17}{99532800}= -\frac{2537}{696729600} \le 0,\end{align*}
and this gives (\ref{eeight}).\\

We have proved (\ref{alpha}), (\ref{deltaprime}), (\ref{delta}), (\ref{esix}), (\ref{eseven}), and (\ref{eeight}), and hence the claim.\end{proof} 

\section{The Equivalence of the Two Conjectures}
We are now in the position to prove the equivalence of Conjectures \ref{CD} and \ref{general}.
\begin{Theorem}\label{main}To prove the Generalized Flag Complex Conjecture, it suffices to prove the Charney-Davis Flag Complex Conjecture.\end{Theorem}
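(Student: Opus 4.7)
The plan is to proceed by induction on $k$, where $2k-1$ is the dimension of the sphere, iterating the four weight-reduction lemmas to strip an arbitrary weighted flag triangulation down to a right-angled one; the right-angled case is then exactly Conjecture \ref{CD}.

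The base case $k=1$ is direct: a flag-complex triangulation $S$ of a $\mathrm{GHS}^1$ is a cycle of length $V \ge 4$, since the flag hypothesis forbids $3$-cycles (three pairwise incident edges would span a $2$-simplex). Evaluating (\ref{omega}) over the empty simplex, the $V$ vertices, and the $V$ edges of weights $m_i \ge 2$ yields
\[-\omega(S) \;=\; \frac{V}{2} - 1 - \sum_{i=1}^{V} \frac{1}{2m_i} \;\ge\; \frac{V}{2} - 1 - \frac{V}{4} \;=\; \frac{V}{4} - 1 \;\ge\; 0.\]
For the inductive step, fix $k \ge 2$, assume Conjecture \ref{general} in all odd dimensions below $2k-1$, and let $S$ triangulate a $(2k-1)$-dimensional GHS as a flag complex with every $W_\sigma$ finite. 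This inductive hypothesis is precisely the standing assumption of Lemmas \ref{first}, \ref{weight5}, and the subsequent weight-$4$ and weight-$3$ lemmas. We apply them in sequence: first Lemma \ref{first}, edge by edge, to bring every edge of weight $\ge 6$ below $6$; then Lemma \ref{weight5} to each remaining weight-$5$ edge to lower it to weight $4$; then the weight-$4$ lemma to each weight-$4$ edge; and finally the weight-$3$ lemma to each weight-$3$ edge. As observed before Lemma \ref{first}, decreasing an edge weight in a flag complex preserves both the set of simplices and the finiteness of every $W_\sigma$, so the complex itself remains unchanged and each lemma continues to apply at its stage.

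After the iteration terminates, $S$ has been transformed into a right-angled flag-complex triangulation $S'$ of the same $\mathrm{GHS}^{2k-1}$, for which Conjecture \ref{general} is exactly the Charney--Davis Conjecture \ref{CD}; chaining the reductions, verification of Conjecture \ref{general} for $S$ reduces to verification for $S'$, which closes the induction. The one bookkeeping point worth mentioning is that the four lemmas chain cleanly: Lemma \ref{weight5} requires no remaining edges of weight $\ge 6$, the weight-$4$ lemma requires none of weight $\ge 5$, and the weight-$3$ lemma none of weight $\ge 4$. This is automatic since the reductions only decrease weights and never create larger ones, so processing all weight-$\ge 6$ edges first, then all weight-$5$, and so on, respects the hypothesis at each stage. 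No genuinely difficult step arises in this final assembly: all the substance of the theorem is concentrated in the weight-reduction lemmas already established, and the present theorem is merely their inductive packaging.
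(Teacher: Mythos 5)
Your proof is correct and takes essentially the same route as the paper: induction on $k$, with the inductive step consisting of applying the weight-reduction lemmas repeatedly, from the highest finite weight downwards, until the complex is right-angled, at which point Conjecture \ref{CD} gives the claim. The only difference is cosmetic: the paper anchors the induction at the empty complex ($k = 0$), whereas you verify the $1$-dimensional case $k = 1$ directly via the cycle computation, a harmless (and arguably tidier) variant.
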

\begin{proof}Indeed, suppose the Charney-Davis Conjecture \ref{CD} held true. Then we could prove the Generalized Flag Complex Conjecture by induction. The base case $k = 0$ of the empty simplicial complex $S$ is clear. If $k > 1$ we would be in the setting of the lemmas of the preceding section. We could without loss of generality decrease the weights of the edges step by step, every time reducing the weight of an edge of the highest finite weight, until we would be left with $S^{\prime}$ in which all the finite weight edges would be of weight $2$. The claim of Conjecture \ref{CD} applied to $S^{\prime}$ would imply the claim of Conjecture \ref{general} to $S$.\end{proof}
\section*{Acknowledgements}
The work presented here originates from a guided research project for my Bachelor Thesis at Jacobs University Bremen. I would like to thank my supervisor Dr. Pavel Tumarkin for helpful discussions, as well as many comments and suggestions.
\newpage
\section*{Appendix}

\begin{table}[here]\label{classification}
 \centering
\begin{tabular}{|c|c|c|} \hline
 Name  & Coxeter graph & Cardinality \\ \hline \hline
 $A_n\ \ (n\ge 1)$& \scalebox{0.6}{\includegraphics{An.jpg}} &  $(n + 1)!$  \\ \hline
$ B_n\ \ (n \ge 2) $ & \scalebox{0.5}{\includegraphics{B.jpg}} & $2^nn!$ \\ \hline
$ D_n\ \ (n \ge 4) $ & \scalebox{0.23}{\includegraphics{D.jpg}} & $2^{n - 1}n!$ \\ \hline
$ E_6 $ & \scalebox{0.23}{\includegraphics{E6a.jpg}} & $2^7\cdot 3^4\cdot5$ \\ \hline
$ E_7 $ & \scalebox{0.23}{\includegraphics{E7a.jpg}} & $2^{10}\cdot 3^4\cdot 5\cdot 7$ \\ \hline
$ E_8 $ & \scalebox{0.23}{\includegraphics{E8a.jpg}} & $2^{14}\cdot 3^5\cdot 5^2\cdot 7$ \\ \hline
$ F_4 $ & \scalebox{0.5}{\includegraphics{F4a.jpg}} & $2^7\cdot 3^2$ \\ \hline
$ H_3 $ & \scalebox{0.5}{\includegraphics{H3a.jpg}} & $120$ \\ \hline
$ H_4 $ & \scalebox{0.5}{\includegraphics{H4a.jpg}} & $14 400$ \\ \hline
$ I_2\ \ (m \ge 5) $ & \scalebox{0.5}{\includegraphics{I2_m_a.jpg}} & $2m$ \\ \hline
\end{tabular}
 \caption{All finite irreducible Coxeter groups}
\end{table}

\bibliographystyle{amsplain}	
\bibliography{myrefs}		

\providecommand{\bysame}{\leavevmode\hbox to3em{\hrulefill}\thinspace}
\providecommand{\MR}{\relax\ifhmode\unskip\space\fi MR }
\providecommand{\MRhref}[2]{%
  \href{http://www.ams.org/mathscinet-getitem?mr=#1}{#2}
}
\providecommand{\href}[2]{#2}
\begin{thebibliography}{1}

\bibitem{Charney}
Ruth Charney and Michael Davis, \emph{The {E}uler characteristic of a
  nonpositively curved, piecewise {E}uclidean manifold}, Pacific J. Math.
  \textbf{171} (1995), no.~1, 117--137.

\bibitem{Davis}
Michael~W. Davis, \emph{The geometry and topology of {C}oxeter groups}, London
  Mathematical Society Monographs Series, vol.~32, Princeton University Press,
  Princeton, NJ, 2008.

\bibitem{Okun}
Michael~W. Davis and Boris Okun, \emph{Vanishing theorems and conjectures for
  the {$\ell^2$}-homology of right-angled {C}oxeter groups}, Geom. Topol.
  \textbf{5} (2001), 7--74 (electronic).

\bibitem{Frohmader}
Andrew Frohmader, \emph{The {C}harney-{D}avis conjecture for certain
  subdivisions of spheres}, Discrete Comput. Geom. \textbf{43} (2010), no.~3,
  705--715.

\bibitem{Humphreys}
James~E. Humphreys, \emph{Reflection groups and {C}oxeter groups}, Cambridge
  Studies in Advanced Mathematics, vol.~29, Cambridge University Press,
  Cambridge, 1990.

\bibitem{Karu}
Kalle Karu, \emph{The {$cd$}-index of fans and posets}, Compos. Math.
  \textbf{142} (2006), no.~3, 701--718.

\bibitem{Sloane}
N.~J.~A. Sloane and Simon Plouffe, \emph{The encyclopedia of integer
  sequences}, Academic Press Inc., San Diego, CA, 1995, Online at
  \url{http://www.research.att.com/~njas/sequences/}.

\bibitem{Stanley}
Richard~P. Stanley, \emph{Flag {$f$}-vectors and the {$cd$}-index}, Math. Z.
  \textbf{216} (1994), no.~3, 483--499.

\end{thebibliography}

\medskip\noindent {\footnotesize Massachusetts Institute of Technology,\\ Department of Mathematics, Room 2-089, \\ 77 Massachusetts Avenue, \\ Cambridge, MA 02139, \\ United States of America \\}
{\footnotesize E-mail address: {\tt kestutis@mit.edu }}
\end{document}